\newtheorem{theorem}{Theorem}[section]
\newtheorem{cor}[theorem]{Corollary}
\newtheorem{lem}[theorem]{Lemma}
\newtheorem{prop}[theorem]{Proposition}
\theoremstyle{definition}
\newtheorem{example}[theorem]{Example}
\newtheorem{defi}[theorem]{Definition}
\newtheorem{rem}[theorem]{Remark}
\numberwithin{equation}{section} \DeclareMathOperator{\Hom}{Hom}
 \DeclareMathOperator{\Hoch}{Hoch}
 \DeclareMathOperator{\MC}{MC}
 \DeclareMathOperator{\Def}{Def}
\DeclareMathOperator{\Map}{Map} 
\newcommand{\noproof}{\begin{flushright} \ensuremath{\square}
\end{flushright}}
\def\ground{\mathbf k}
\def\g{\mathfrak g}
\def\h{\mathfrak h}
\def\MCmod{\mathscr {MC}}
\def\ra{\rightarrow}
\def\Z{\mathbb{Z}}
\def\id{\operatorname{id}}
\def\Der{\operatorname{Der}}
\def\ad{\operatorname{ad}}
\def\CE{\operatorname{CE}}
\def\Hoch#1{B#1}
\def\Hoch{\operatorname{Hoch}}
\def\CHoch{\operatorname{CHoch}}
\def\tp#1#2{#2\otimes#1}
\thanks{}
\begin{document}

\title[L-infinity maps and twistings...]{L-infinity maps and
twistings}
\author{J. Chuang  \and A.~Lazarev}
\thanks{The second author is partially supported by an EPSRC research
grant}
\address{Centre for Mathematical Science\\ City University London\\London
EC1V 0HB\\UK}
\email{J.Chuang@city.ac.uk}
\address{University of Leicester\\ Department of
Mathematics\\Leicester LE1 7RH, UK.}
\email{al179@leicester.ac.uk} \keywords{Differential graded Lie
algebra, Maurer-Cartan element, A-infinity algebra, graph homology,
Morita equivalence} \subjclass[2000]{18D50, 57T30, 81T18, 16E45}
\begin{abstract}
We give a construction of an L-infinity map from any L-infinity
algebra into its truncated Chevalley-Eilenberg complex as well as its
cyclic and A-infinity analogues. This map fits with the inclusion
into the full Chevalley-Eilenberg complex (or its respective
analogues) to form a homotopy fiber sequence of L-infinity algebras.
Applications to deformation theory and graph homology are given. We
employ the machinery of Maurer-Cartan functors in L-infinity and
A-infinity algebras and associated twistings which should be of
independent interest.
\end{abstract}

\maketitle

\section{introduction}
Let $V$ be a Lie algebra and $f:V\ra\Hom(V,V)$ be the map that
associates to $v\in V$ the endomorphism $f(v):=\ad(v):x\mapsto [v,x]$
for $x\in V$. Then $f$ is clearly  a Lie algebra homomorphism
$V\ra\Der(V,V)$ from $V$ into the Lie algebra of derivations of $V$.
If instead $V$ is an associative algebra, $f$ is still a
Lie algebra map from the commutator Lie algebra of $V$ into the Lie
algebra of derivations of $V$. Furthermore, a variant of
this construction works when $V$ has an invariant scalar
product $\langle, \rangle$, so that $\langle[a,b],c\rangle=\langle
a,[b,c]\rangle$ in the Lie case and $\langle ab,c\rangle=\langle
a,bc\rangle$ in the associative case; here $a,b,c\in V$. In this case
the image of $f$ lands in the subspace of $\Der(V,V)$ consisting of
\emph{skew self-adjoint} endomorphisms of $V$.

One of the purposes of this paper is to generalize this simple and well-known
construction to $L_\infty$- and $A_\infty$-algebras. This
generalization will be applied to characteristic classes of
$A_\infty$-algebras in a future paper, but we believe that it is of
independent interest. A homotopy-invariant analogue of $\Der(V,V)$ for an
$L_\infty$-algebra $V$ is a truncated Chevalley-Eilenberg complex
$\overline{\CE}(V,V)$ which is part of the following homotopy fiber
sequence of complexes:
\begin{equation}\label{fibL1}
\xymatrix{V\ar^-f[r]&\overline{\CE}(V,V)\ar^-g[r]&\CE(V,V),}
\end{equation}
where $\CE(V,V)$ is the usual (untruncated) Chevalley-Eilenberg
complex and $f$ is a certain generalization of the map
$f=\ad$ defined above. Similarly, in the $A_\infty$ algebra context, a homotopy-invariant analogue of $\Der(V,V)$ is a truncated Hochschild complex
$\overline{\Hoch}(V,V)$ which is part of the following homotopy fiber
sequence of complexes:
\begin{equation}\label{fibL2}
\xymatrix{V\ar^-f[r]&\overline{\Hoch}(V,V)\ar^-g[r]&\Hoch(V,V),}
\end{equation}
where $\Hoch(V,V)$ is the usual (untruncated) Hochschild complex.
Furthermore, in the presence of an invariant scalar product (which
gives rise to a cyclic, or symplectic, $A_\infty$- or
$L_\infty$-algebra $V$, cf. \cite{HL}) we have the following homotopy
fiber sequences:
\begin{equation}\label{fibL3}
\xymatrix{V\ar^-f[r]&\overline{\CE}(V)\ar^-g[r]&\CE(V),}
\end{equation}
in the $L_\infty$-case; here $\CE(V)$ is the usual (untruncated)
Chevalley-Eilenberg complex with trivial coefficients and
$\overline{\CE}(V)$ is its truncated version and
\begin{equation}\label{fibL4}
\xymatrix{V\ar^-f[r]&\overline{\CHoch}(V,V)\ar^-g[r]&\CHoch(V,V),}
\end{equation}
in the $A_\infty$-case; here $\CHoch(V)$ is the usual (untruncated)
cyclic Hochschild complex and $\overline{\CHoch}(V)$ is its truncated
version.

We show that in all four cases the map $f$ can be lifted to an
$L_\infty$-map $\mathbf{f}$ possessing a certain universal property which,
roughly, says that $\mathbf{f}$ takes any Maurer-Cartan element $\xi$ in $V$
into the Maurer-Cartan element in $\overline{\CE}(V,V)$, (or
$\overline{\Hoch}(V,V), \overline{\CE}(V), \overline{\CHoch}(V,V)$)
which corresponds to the $L_\infty$-structure on $V$ obtained by
\emph{twisting} with $\xi$. The notions of $\infty$-Maurer-Cartan
elements and twisted $\infty$-structures are well-known and appear in
many papers on homological algebra, rational homotopy theory, mirror
symmetry and deformation theory, cf. for example \cite{ Kontsevich,
Merkulov, Fukaya, Getzler}. It seems hard to find a paper collecting
all necessary facts about $\infty$-Maurer-Cartan elements and
twistings and so we give all definitions and proofs of the results
that we need. We do, however, make use of the technology of formal
noncommutative (symplectic) geometry devised by Kontsevich \cite{Kon}
and treated at length in \cite{HL} in the context of
infinity structures.

The obtained results are used to produce a map from the
Chevalley-Eilenberg homology of a cyclic $L_\infty$- or
$A_\infty$-algebra into the corresponding version of a graph complex.

Finally, we show that the sequences (\ref{fibL1}),
(\ref{fibL2}), (\ref{fibL3}), (\ref{fibL4}) are homotopy fiber
sequences of $L_\infty$-algebras (as opposed to simply complexes);
this makes a connection to a recent work of Fiorenza and Manetti
\cite{FM}, as well as an earlier work by Voronov on derived brackets
\cite{Vor, Vor'}.

The paper is organized as follows. The remaining part of the
introduction reviews some results and terminology from $L_\infty$-
and $A_\infty$-algebras and their cohomology theories. Section 2
introduces the notion of Maurer-Cartan space in the infinity-context
and establishes some basic properties needed later on; these results
are undoubtedly known to experts but difficult to find in the
literature. The construction of the $L_\infty$-map $\mathbf{f}$ and a relation
to the graph homology are given in Section 3 and the last section
establishes a connection to the Fiorenza-Manetti construction and
applications to deformation theory.
\subsection{Acknowledgement} The authors wish to thank T.
Schedler, for useful comments on a preliminary version of this paper, and
the referee, for many helpful corrections and suggestions.

\subsection{Notation and conventions}
In this  paper we work in the category of  $\Z/2$-graded
 vector spaces (also known as super-vector spaces) over a field
 $\ground$ of characteristic zero.
However all our results (with obvious modifications) continue to hold
in the $\Z$-graded context. The adjective `differential graded' will
mean `differential $\Z/2$-graded' and will be abbreviated as `dg'. A
(commutative) differential graded (Lie) algebra will be abbreviated
as (c)dg(l)a.  We will often invoke the notion of a \emph{formal}
(dg) vector space; this is just an inverse limit of
finite-dimensional vector spaces. An example of a formal space is
$V^*$, the $\ground$-linear dual to a discrete vector space~$V$. A
formal vector space comes equipped with a topology and whenever we
deal with a formal vector space all linear maps from or into it will
be assumed to be continuous; thus we will always have $V^{**}\cong
V$.  All of our unmarked tensors are understood to be taken over
$\ground$. If $V$ is a discrete space and $W=\lim_\leftarrow{W_i}$ is
a formal space we will write $V\otimes W$ for
$\lim_{\leftarrow}V\otimes W_i$; thus for two discrete spaces $V$ and
$U$ we have $\Hom(V,U)\cong U\otimes V^*$.
 For a $\Z/2$-graded vector space $V=V_0\oplus V_1$ the symbol $\Pi
 V$ will denote the \emph{parity reversion} of $V$; thus $(\Pi
 V)_0=V_1$ while $(\Pi V)_1=V_0$. The symbol $S_n$ stands for the symmetric group on $n$ letters; it is understood to act on the $n$th tensor power of a $\mathbb Z/2$-graded vector space $V^{\otimes n}$ by permuting the tensor factors.

\subsection{Recollections on $L_\infty$- and $A_\infty$-algebras}  Our
basic reference on $L_\infty$- and $A_\infty$-algebras and their
cohomology is \cite{HL}; we now recall some of the results and
terminology from that paper. A non-unital formal
(c)dga is an inverse limit of finite-dimensional nilpotent non-unital (c)dgas.
We will, however, always work with \emph{unital} cdgas, and a formal
cdga will then mean a cdga obtained by adjoining a unit to a formal
non-unital cdga; thus our formal cdgas will be augmented.  The
augmentation ideal of an augmented cdga $A$ will be denoted by $A_+$.
There is an unfortunate clash of terminology: our notion of formality
is \emph{different} to the corresponding notion in rational homotopy
theory. The main examples of formal dgas and cdgas will be completed
tensor and symmetric algebras on (possibly formal) graded vector
spaces $V$; these will be denoted by $\hat{T}V$ and $\hat{S}V$
respectively.

Morphisms between and derivations of formal algebras will not be required
to be compatible with augmentations. Note however that the prototypical examples above (and indeed most of the examples of formal algebras
we consider) are local, and hence continuous morphisms between
them automatically preserve augmentations.

An $A_\infty$-algebra structure on a graded vector
space $V$ is a continuous odd derivation $m$ of the formal dga
$\hat{T}\Pi V^*$ and an $L_\infty$-algebra structure on $V$ is a
continuous odd derivation $m$ of the formal cdga $\hat{S}\Pi V^*$;
additionally $m$ is required to square to zero and have no constant
term. The components $m_i:{T}^i\Pi V\ra \Pi V$ or ${S}^i\Pi V\ra \Pi
V$
of the dual of the restriction of $m$ to $\Pi V^*$
 are the structure maps of the corresponding $A_\infty$- or
 $L_\infty$-structure. If $(V,m)$ and $(V^\prime, m)$ are two
 $A_\infty$- or $L_\infty$-algebras then a morphism between them is a
 continuous (c)dga map $(\hat{T}\Pi V^{\prime*},m')
 \ra (\hat{T}\Pi V^*,m)$ or $(\hat{S}\Pi V^{\prime *},m')
 \ra (\hat{S}\Pi V^*,m)$, respectively.

The Hochschild complex  of an $A_\infty$-algebra $(V,m)$
is the dg vector space  \[\Hoch(V,V):=(\Der(\hat{T}\Pi V^*),[-,m])\] consisting of
(continuous) derivations of $\hat{T}\Pi V^*$ supplied with the
differential $d(x)=[x,m]$ for $x\in \hat{T}\Pi V^*$; this is clearly
a dgla. The \emph{truncated} Hochschild complex
$\overline{\Hoch}(V,V)$ is the sub-dgla of $\Hoch(V,V)$ consisting of
derivations with vanishing constant term.

Similarly the Chevalley-Eilenberg complex $\CE(V,V)$ of an
$L_\infty$-algebra $(V,m)$ is the dg vector space~  $(\Der(\hat{S}\Pi
V^*),[,m])$ consisting of (continuous) derivations of $\hat{S} \Pi V$
supplied with the differential $d(x)=[x,m]$ for $x\in \hat{S}\Pi
V^*$; this is clearly a dgla. The \emph{truncated}
Chevalley-Eilenberg complex $\overline{\CE}(V,V)$ is the sub-dgla of
$\CE(V,V)$ consisting of derivations with vanishing constant term.

We stress that the algebras $\Der(\hat{T}\Pi V^*)$ and
$\Der(\hat{S}\Pi V^*)$ of derivations depend only on the dg vector space $V$.
On the other hand the complexes $\Hoch(V,V)$ and $\CE(V,V)$ depend on $(V,m)$, the
dg vector space $V$ equipped with an $A_\infty$- or $L_\infty$-algebra structure.

We note here that it is more traditional to call the Hochschild
complex or the Chevalley-Eilenberg complex of $(V,m)$ the dg vector space
$\Pi\Hoch(V,V)$ or $\Pi\CE(V,V)$; this has the unpleasant effect that
the dgla structures on these spaces become odd which also introduces
additional signs in various formulas, and this is the reason for our
convention; a similar remark also applies to the cyclic complexes
below.

For a usual associative algebra the complex $\Hoch(V,V)$ has the
familiar form \[V\ra\Hom(V,V)\ra\ldots\ra\Hom(V^{\otimes
n},V)\ra\ldots,\] whereas the complex $\overline{\Hoch}(V,V)$ has the
same Hochschild differential but starts with the term $\Hom(V,V)$.
Similarly, the complex $\CE(V,V)$ for a Lie algebra $V$ has the
form
\[V\ra\Hom(V,V)\ra\ldots\ra\Hom(\Lambda^n(V),V)\ra\ldots,\]
whereas the complex $\overline{\CE}(V,V)$ has the same
Chevalley-Eilenberg  differential but starts with the term
$\Hom(V,V)$. It is clear that, generally, an element in
$\overline{\Hoch}(V,V)$ is a (possibly infinite) linear combination
of Hochschild cochains, i.e. multilinear maps
$(\Pi V)^{\otimes n}\ra \Pi V$,
 and an element in  $\overline{\CE}(V,V)$ is a (possible infinite)
 linear combination of Chevalley-Eilenberg cochains, i.e.
 graded-symmetric multilinear maps
$(\Pi V)^{\otimes n}\ra \Pi V$.

In the presence of a graded symmetric, non-degenerate inner product
$\langle,\rangle$ on $V$ we can define cyclic (also called
symplectic) $L_\infty$- and $A_\infty$-algebras by the requirement
that the expressions $\langle m_i(x_1, \ldots, x_i),x_{i+1}\rangle$
be invariant with respect to cyclic permutations of the elements
$x_1,\ldots, x_{i+1}\in \Pi V$. The corresponding complexes are denoted
by $\CHoch(V,V)$, $\overline{\CHoch}(V,V)$,  $\CE(V)$ and
$\overline{\CE}(V)$ respectively.

So, $\CHoch(V,V)$ is the
sub dg subspace of $\Hoch(V,V)$ consisting
of cyclically invariant Hochschild cochains,  and
$\overline{\CHoch}(V,V)$ is the corresponding
sub dg vector space of $\overline{\CHoch}(V,V)$.
Likewise, $\CE(V)$ is the
sub dg subspace of $\CE(V,V)$ consisting
of cyclically invariant Chevalley-Eilenberg cochains,  and
$\overline{\CE}(V)$ is the corresponding
sub dg vector space of
$\overline{\CE}(V,V)$.

Note that $\CE(V)$ admits an alternative
interpretation as follows. Consider the maximal ideal $\hat{S}\Pi
V^*_+\subset \hat{S}\Pi V^*$ consisting of formal power series with
vanishing constant term. Then the derivation $m$ restricts to
$\hat{S}\Pi V^*_+$ and the dg vector space $(\hat{S}\Pi V^*_+, m)$ is
isomorphic to $\CE(V)$; this is a consequence of the Poincar\'e lemma
in formal symplectic geometry. Note also that $\CE(V)$ is essentially
(disregarding the shift and the lowest degree term) the
Chevalley-Eilenberg complex of $(V,m)$ with \emph{trivial
coefficients}.

We will need to consider infinity-algebra structures over a formal
cdga $A$. We define an \emph{$A$-linear} $A_\infty$-algebra structure
on a graded vector space $V$ to be a continuous odd $A$-linear
derivation $m$ of the formal dga $\tp{\hat{T}\Pi V^*}{A}$; in
addition $m+d_A\otimes\id$ is required to square to zero and have vanishing
constant term. The components $m_i:\tp{{T}^i\Pi V}{A} \ra
\tp{\Pi V}{A}$ of the $A$-linear dual of the restriction of $m$ to
$\tp{\Pi V^*}{A}$
 are the structure maps of the corresponding $A_\infty$- or
 $L_\infty$-structure;
 they may be viewed as $A$-multilinear maps on $\tp{\Pi V}{A}$.
 Note that a ($\ground$-linear) $A_\infty$-algebra $(V,m)$ is also an
 $A$-linear
 $A_\infty$-algebra $(V,m^A)$, for any formal cdga $A$, by extension
 of scalars:
 $m^A:=\tp{m}{\id_A}$ (equivalently $m^A_i:=\tp{m_i}{\id_A}$).
   In a similar way we can introduce the notion of an
   \emph{$A$-linear} $L_\infty$-algebra structure on a graded vector
   space $V$. If, in addition, $V$ possesses a graded symmetric,
   non-degenerate inner product, $A$-linear cyclic $A_\infty$- and
   $L_\infty$-algebra structures on $V$ are defined in an obvious
   way.

\section{Maurer-Cartan functor and twisting in $L_\infty$- and
$A_\infty$-algebras}
In this section we describe the Maurer-Cartan functor for
$L_\infty$- and $A_\infty$-algebras in both cyclic and non-cyclic
cases and associated twisted structures.

\begin{defi}Let $V$ be a graded vector space.
\begin{enumerate}
\item
Let $m$ be an $L_\infty$-structure on $V$  and $A$ be a formal cdga
with maximal ideal $A_+$. Then an even element $\xi\in\tp{\Pi V}{A_+}$ is
\emph{Maurer-Cartan} if $(d_A\otimes\id)(\xi)+\sum_{i=1}^\infty
\frac{1}{i!}m^A_i(\xi^{\otimes i})=0$.
\item
Let $m$ be an $A_\infty$-structure on $V$  and $A$ be a formal dga
with maximal ideal $A_+$. Then an even element $\xi\in\tp{\Pi V}{A_+}$ is
\emph{Maurer-Cartan} if $(d_A\otimes\id)(\xi)+\sum_{i=1}^\infty m^A_i(\xi^{\otimes
i})=0$.
\end{enumerate}
The set of Maurer-Cartan elements in $\tp{\Pi V}{A_+}$ will be
denoted by $\MC(V,A)$. The correspondence $(V,A)\mapsto \MC(V,A)$ is
functorial in $A$.
\end{defi}
\begin{prop}\label{MCfunctor}\
\begin{enumerate}
\item
Let $(V,m)$ be an $L_\infty$-algebra and $A$ be a formal cdga. Then
functor $A\mapsto\MC(V,A) $ is represented by the formal cdga
$(\hat{S}\Pi V^*,m)$.
\item
Let $(V,m)$ be an $A_\infty$-algebra and $A$ be a formal dga. Then
functor $A\mapsto\MC(V,A) $ is represented by the formal dga
$(\hat{T}\Pi V^*,m)$.
\end{enumerate}
\end{prop}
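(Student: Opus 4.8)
The plan is to produce, naturally in $A$, a bijection between $\MC(V,A)$ and the set of continuous dg cdga morphisms $(\hat{S}\Pi V^*,m)\to(A,d_A)$; this natural isomorphism is exactly the assertion of representability. First I would set up the bijection on the level of the underlying linear data, ignoring differentials. Since $\hat{S}\Pi V^*$ is the completed free graded-commutative algebra on the generating space $\Pi V^*$, a continuous unital algebra homomorphism $\phi\colon\hat{S}\Pi V^*\to A$ is determined by and freely prescribed by its restriction $\phi_1:=\phi|_{\Pi V^*}$; by the remark in the excerpt continuity forces $\phi$ to be local, so $\phi_1$ lands in $A_+$, and the resulting power series converge precisely because $A_+$ is pro-nilpotent. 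Using the perfect (continuous) pairing between the discrete space $\Pi V$ and the formal space $\Pi V^*$, an even continuous map $\phi_1\colon\Pi V^*\to A_+$ is the same datum as an even element $\xi\in\Pi V\otimes A_+$, via $\phi_1(\alpha)=\langle\alpha,\xi\rangle$. This matches underlying sets, and it remains to identify the condition ``$\phi$ commutes with the differentials'' with the Maurer--Cartan equation.

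The crux is the translation of the differential condition. Because $m$ and $d_A$ are derivations and $\phi$ is an algebra map, the map $\phi m-d_A\phi$ is a $\phi$-derivation $\hat{S}\Pi V^*\to A$ and hence vanishes iff it vanishes on the generators $\Pi V^*$. Writing $m=\sum_i m_i^*$, with $m_i^*\colon\Pi V^*\to\hat{S}^i\Pi V^*$ the transpose of the structure map $m_i$, and using that $\phi$ on $\hat{S}^i\Pi V^*$ is the $i$-fold symmetric product of $\phi_1$, I would compute for a generator $\alpha\in\Pi V^*$:
\[
\phi(m(\alpha))=\sum_i\phi(m_i^*(\alpha))=\Big\langle\alpha,\ \sum_i\frac{1}{i!}\,m^A_i(\xi^{\otimes i})\Big\rangle,
\qquad
d_A(\phi(\alpha))=\big\langle\alpha,\ (d_A\otimes\id)(\xi)\big\rangle .
\]
Each term $\phi(m_i^*(\alpha))$ lies in $A_+^i$, so the middle sum converges. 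The equality $\phi m=d_A\phi$ on generators then says that $\langle\alpha,(d_A\otimes\id)(\xi)+\sum_i\frac{1}{i!}m^A_i(\xi^{\otimes i})\rangle=0$ for all $\alpha\in\Pi V^*$, and since the pairing is non-degenerate this is exactly the Maurer--Cartan equation of the Definition. Thus $\xi\in\MC(V,A)$ iff the associated $\phi$ is a morphism of dg cdgas.

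The one genuinely delicate point — and the step I expect to be the main obstacle — is the middle identity above: one must verify that the combinatorial factors generated by dualizing the graded-symmetric multiplication collapse to precisely $1/i!$, while carefully tracking the Koszul signs coming from the parity reversion $\Pi$ and from permuting tensor factors. This is exactly where the characteristic-zero hypothesis is used, so that symmetric invariants and coinvariants agree and the factorials are invertible. Once this is in hand, naturality in $A$ is immediate: a morphism $g\colon A\to B$ of formal cdgas acts on $\MC$ by $\xi\mapsto(\id\otimes g)(\xi)$ and on morphisms by post-composition, and these are intertwined by the bijection just constructed. Finally, the $A_\infty$ statement is proved by the identical argument with $\hat{S}$ replaced by $\hat{T}$ and graded-symmetric products replaced by tensor products; the absence of symmetrization removes the factor $1/i!$, in agreement with the Maurer--Cartan equation of part~(2), which has $\sum_i m^A_i(\xi^{\otimes i})$ with no factorials.
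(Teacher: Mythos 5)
Your proposal is correct and takes essentially the same route as the paper's own proof: both reduce a continuous morphism of formal (c)dgas to its restriction to the generating space $\Pi V^*$, identify that restriction with an even element $\xi\in\Pi V\otimes A_+$, and check compatibility with the differentials only on generators, with the factors $\frac{1}{i!}$ arising exactly as you say from the characteristic-zero identification of $S_n$-invariants with $S_n$-coinvariants. Your handling of the $A_\infty$ case (same argument, no symmetrization, hence no factorials) also matches the paper.
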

\begin{proof} For (1) let $f:\hat{S}\Pi V^*\ra A$ be a map of formal
cdgas. Forgetting the differentials, such a map is equivalent to a
continuous linear map $\Pi V^*\ra A$, which is in turn determines an
element $\xi_f\in \tp{\Pi V}{A}$. The condition that $f$ commutes
with differentials could be rewritten as the commutative diagram:
\begin{equation}\label{comm}
\xymatrix{ \Pi V^*\ar^{f|_{\Pi V^*}}[d]\ar^{m^*}[r]& (S\Pi
V)^*\ar^\cong[r]& \hat{S}\Pi V^*
\ar^{f}[d]\\
A\ar^d[rr]&&A }
\end{equation}
Note that the canonical isomorphism
\[
(S^n\Pi V)^*:=((T^n\Pi V)_{S_n})^*=(\hat{T}^n\Pi V^*)^{S_n}\ra
(\hat{T}^n\Pi V^*)_{S_n}=:{S}^n\Pi V^*
\] identifying invariants with coinvariants has the form
\[(\hat{T}^n\Pi V^*)^{S_n}\ni x_1\otimes\ldots\otimes x_n\mapsto
\frac{1}{n!}x_1\otimes\ldots\otimes x_n\in (\hat{T}^n\Pi
V^*)_{S_n}.\]
Taking this into account we obtain that the commutativity of the
diagram (\ref{comm}) is equivalent to the Maurer-Cartan equation for
$\xi_f$:
\[ (d_A\otimes\id)(\xi_f)+  m_1^A(\xi_f)+\frac{1}{2!}m_2^{A}(\xi_f\otimes\xi_f)+\ldots=0\]
where $m_i^{A}:(\tp{\Pi V}{A})^{\otimes i}\ra \tp{\Pi V}{A}$ is the
$i$th higher bracket in the $A$-linear $L_\infty$-algebra $V$.

We omit the proof of (2) which is similar to, but simpler than, that
of (1) in that one does not to have to worry about identification of
$S_n$-invariants with $S_n$-coinvariants (and thus, no factorials
will appear in the formulas).
\end{proof}
\begin{rem}\label{nonform}
There is another version of the MC-functor which is occasionally
useful. For an $L_\infty$-algebra $(V,m)$ it associates to a \emph{not
necessarily formal} cdga $A$ the set $\xi\in\Pi V\otimes A$ such that
$\sum_{i=1}^\infty \frac{1}{i!}m^A_i(\xi^{\otimes i})=0$ and
similarly when $(V,m)$ is an $A_\infty$-algebra. The formal series above does not
have to converge and to justify this definition one has to impose
some additional restrictions on $V$, e.g. that it possesses only
finitely many non-zero higher products as is the case when $V$ is a
dga or a dgla. In this situation (at least when $V$ is
finite-dimensional) the functor $\MC(V,-)$ is still represented by a
cdga $(S\Pi V^*, m)$ in the $L_\infty$-case and $(T\Pi V^*, m)$ in
the $A_\infty$-case (notice the absence of the completion). All
statements in this section have obvious analogues in this context.
\end{rem}
\begin{rem}\label{A-L}
Let $(V,m)$ be an $A_\infty$-algebra. Viewing $m$ as a map $\Pi
V^*\ra\hat{T}\Pi V^*$ and composing with the projection $\hat{T}\Pi
V^*\ra \hat{S}\Pi V^*$ we obtain a map $\overline{m}:\Pi
V^*\ra\hat{S}\Pi V^*$ which, obviously, gives an $L_\infty$-structure
on $V$ (this is an infinity version of the familiar commutator Lie
algebra construction). Furthermore, for a formal cgda $A$ the
Maurer-Cartan equations in $\tp{\Pi V}{A}$ are the same regardless
of whether $(V,m)$ is considered as an $L_\infty$- or $A_\infty$-algebra;
in other words an element $\xi\in\tp{\Pi V}{A}$ is an
$L_\infty$-MC-element if and only if it is an $A_\infty$-MC-element.
\end{rem}
The Yoneda lemma now gives:
\begin{cor}\
\begin{enumerate}
\item Let $(V,m)$ and $(V^\prime, m^\prime)$ be two
$L_\infty$-structures and $A$ be a formal cdga. Then any
$L_\infty$-map $V\ra V^\prime$ determines and is determined by a
morphism of functors (in the variable $A$):
    $\MC(V,A)\ra\MC(V^\prime, A)$.
\item
Let $(V,m)$ and $(V^\prime, m^\prime)$ be two $A_\infty$-structures
and $A$ be a formal dga. Then any $A_\infty$-map $V\ra V^\prime$
determines and is determined by a morphism of functors (in the
variable $A$):
    $\MC(V,A)\ra\MC(V^\prime, A)$.

\end{enumerate}
\end{cor}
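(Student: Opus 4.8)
The plan is to deduce the statement directly from the representability result of Proposition \ref{MCfunctor} together with the Yoneda lemma, so that essentially no computation is required beyond a careful bookkeeping of variances. First I would fix the categorical setup: let $\mathcal C$ denote the category of formal cdgas (in case (1)) or formal dgas (in case (2)), with continuous (c)dga morphisms. By Proposition \ref{MCfunctor}(1) the functor $A\mapsto\MC(V,A)$ is represented by $R_V:=(\hat S\Pi V^*,m)$, meaning there is a natural isomorphism $\MC(V,A)\cong\Hom_{\mathcal C}(R_V,A)$ of functors in $A$; likewise $\MC(V',-)\cong\Hom_{\mathcal C}(R_{V'},-)$ with $R_{V'}:=(\hat S\Pi V'^*,m')$. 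Thus both Maurer--Cartan functors are covariant hom-functors on $\mathcal C$, represented by the Chevalley--Eilenberg cdgas $R_V$ and $R_{V'}$.

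Next I would invoke the definition of an $L_\infty$-map recalled earlier in the text: an $L_\infty$-map $V\to V'$ is by definition a continuous cdga morphism $R_{V'}\to R_V$, i.e.\ an element of $\Hom_{\mathcal C}(R_{V'},R_V)$. The Yoneda lemma, applied to the covariant hom-functors $\Hom_{\mathcal C}(R_V,-)$ and $\Hom_{\mathcal C}(R_{V'},-)$, then yields a bijection
\[
\Hom_{\mathcal C}(R_{V'},R_V)\xrightarrow{\ \cong\ }\operatorname{Nat}\big(\Hom_{\mathcal C}(R_V,-),\,\Hom_{\mathcal C}(R_{V'},-)\big),
\]
sending a morphism $\phi:R_{V'}\to R_V$ to the natural transformation given by precomposition with $\phi$. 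Composing with the isomorphisms of the previous paragraph identifies the left-hand side with the set of $L_\infty$-maps $V\to V'$ and the right-hand side with $\operatorname{Nat}(\MC(V,-),\MC(V',-))$, which is exactly the asserted ``determines and is determined by'' bijection. Part (2) is the verbatim same argument with $\hat S$ replaced by $\hat T$ and Proposition \ref{MCfunctor}(2) in place of (1).

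The one point I would be careful about—and which I regard as the only genuine content—is the matching of variances: the functor $\MC(V,-)$ is covariant in $A$ yet is represented by $R_V$, so an $L_\infty$-map $V\to V'$ corresponds to a cdga morphism $R_{V'}\to R_V$ running in the \emph{opposite} direction, and it is precisely this double reversal (the algebra-of-functions reversal together with the reversal in precomposition) that makes the induced natural transformation go from $\MC(V,-)$ to $\MC(V',-)$ rather than the other way. To render the statement fully transparent I would finally unwind the Yoneda transformation into concrete terms, checking that under the correspondence of Proposition \ref{MCfunctor} it sends a Maurer--Cartan element $\xi\in\MC(V,A)$ to its push-forward $\sum_{i\ge 1}\tfrac{1}{i!}\,\phi^A_i(\xi^{\otimes i})$ along the components $\phi_i$ of the $L_\infty$-map (and to $\sum_{i\ge 1}\phi^A_i(\xi^{\otimes i})$, without factorials, in the $A_\infty$-case). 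This is a routine verification using the identification of cdga maps with Maurer--Cartan elements already carried out in the proof of Proposition \ref{MCfunctor}.
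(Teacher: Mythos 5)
Your proposal is correct and follows essentially the same route as the paper: the paper's own proof is exactly the observation that an $L_\infty$-map (resp.\ $A_\infty$-map) $V\ra V'$ is \emph{by definition} a continuous morphism $(\hat{S}\Pi V'^*,m')\ra(\hat{S}\Pi V^*,m)$ (resp.\ $(\hat{T}\Pi V'^*,m')\ra(\hat{T}\Pi V^*,m)$) of the representing objects from Proposition \ref{MCfunctor}, combined with the Yoneda lemma. Your additional remarks on variance and the explicit unwinding $\xi\mapsto\sum_{i\ge 1}\frac{1}{i!}\phi_i^A(\xi^{\otimes i})$ are consistent with the paper's conventions but go beyond what its one-line proof records.
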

\begin{proof}
We only have to note that an $L_\infty$-map $V\ra V^\prime$ is by
definition a (continuous) map of formal cdgas $(\hat{S}\Pi V^{\prime
*}, m^\prime)\ra
 (\hat{S}\Pi V^*, m)$ and an $A_\infty$-map $V\ra V^\prime$ is  a
 (continuous) map of formal dgas $(\hat{T}\Pi V^{\prime
 *},m^\prime)\ra (\hat{T}\Pi V^*,m)$.
\end{proof}
We will now turn to twistings. Let $(V,m)$ be an $L_\infty$-algebra and
$A$ be a formal cdga. We shall explain how a Maurer-Cartan element $\xi\in\MC(V, A)$
gives rise to a new $L_\infty$-algebra $(V,m^{\xi})$ on the same underlying space; the
derivation $m^{\xi}$ is obtained by applying an appropriate automorphism of $\Der(\hat{T}\Pi V)$ to $m$, or
equivalently as a constant shift by $\xi$ in the arguments of $m$. Both points of view are useful and so  we describe both in detail and explain the relationship between them.

 We may regard
$\xi$ as a (formal) $A$-linear derivation of the formal algebra
$\tp{\hat{S}\Pi V^*}{A}$; indeed $\xi$ could be viewed as a linear
function on $\Pi V^*\cong \tp{\Pi(V^*)}{1}\subset
\tp{\hat{S}\Pi V^*}{A}$ and we extend it to the whole of $\tp{\hat{S}\Pi V^*}{A}$ by the $A$-linear Leibniz rule.

Consider now $e^\xi:=\id+\xi+\ldots$ as an automorphism of
$\tp{\hat{S}\Pi V^*}{A}$; it is clear that this formal power series
converges. This automorphism acts by conjugations on derivations of
$\tp{\hat{S}\Pi V^*}{A}$; in particular on those of square zero;
moreover it turns out that it takes one $A$-linear
$L_\infty$-structure on $V$ to another such structure; the precise
statement is given below.

A similar discussion is applicable to $A_\infty$-algebras as well.
Namely, let $(V,m)$ be an $A_\infty$-algebra, $A$ be a formal dga and
$\xi\in\MC(V, A)$. We shall view $\xi$ as a (formal) $A$-linear
derivation of the formal algebra $\tp{\hat{T}\Pi V^*}{A}$; indeed
$\xi$ could be viewed as a linear function on
$\Pi V^*\cong \tp{\Pi(V^*)}{1}\subset \tp{\hat{T}\Pi V^*}{A}$ and we extend it to
the whole of $\tp{\hat{T}\Pi V^*}{A}$ by the $A$-linear Leibniz
rule. Then the automorphism $e^\xi$ acts on the set of $A$-linear
$A_\infty$-structures on $V$.
Here is the general formulation:
\begin{theorem}\
\begin{enumerate}
\item
\begin{enumerate}
\item
Let $(V,m)$ be an $L_\infty$-algebra, $A$ be a formal cdga and
$\xi\in\tp{\Pi V}{A_+}$. Then $m^\xi:=e^\xi m^{A} e^{-\xi}-(d_A\otimes \id)(\xi)$,
considered as a derivation of $\tp{\hat{S}\Pi V^*}{A}$, has no
constant term (and thus, gives another $A$-linear
$L_\infty$-structure on $V$) if and only if $\xi$ is Maurer-Cartan.
The process of passing from $m$ to $m^\xi$ is called \emph{twisting}
by a Maurer-Cartan element $\xi$.
\item
For $x_1,\ldots, x_n\in\Pi V$ the following formula holds:
\begin{equation}\label{twist1}
m^\xi_n(x_1, \ldots, x_n)=\sum_{i=0}^\infty
\frac{1}{i!}m^A_{n+i}(\xi,\ldots, \xi, x_1,\ldots, x_n).
\end{equation}
\end{enumerate}
\item
\begin{enumerate}
\item
Let $(V,m)$ be an $A_\infty$-algebra, $A$ be a formal dga and
$\xi\in\tp{\Pi V}{A_+}$. Then $m^\xi:=e^\xi m^Ae^{-\xi}-(d_A\otimes \id)(\xi)$, considered
as a derivation of $\tp{\hat{T}\Pi V^*}{A}$, has no constant term
(and thus, gives another $A$-linear $A_\infty$-structure on $V$) if
and only if $\xi$ is Maurer-Cartan. The process of passing from $m$
to $m^\xi$ is called \emph{twisting} by a Maurer-Cartan element
$\xi$.
\item
For $x_1,\ldots, x_n\in\Pi V$ the following formula holds:
\begin{equation}\label{twist2}
m^\xi_n(x_1, \ldots, x_n)=\sum_{i=0}^\infty m^A_{n+i}(\xi,\ldots,
\xi| x_1,\ldots, x_n),
\end{equation}
where
$$m^A_{n+i}(\xi,\ldots, \xi| x_1,\ldots, x_n):=\sum m^A_{n+i}(z_1,\ldots,z_{n+i}),$$
the sum running over
all ${n+i}\choose{n}$ sequences $z_1,\ldots,z_{n+i}$ containing $x_1,\ldots, x_n$ in order, together
with $i$ copies of  $\xi$.
\end{enumerate}
\end{enumerate}
\end{theorem}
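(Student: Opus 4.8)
The plan is to prove the $L_\infty$-statement (1) in full and to obtain the $A_\infty$-statement (2) by repeating the argument with the completed symmetric algebra $\hat S\Pi V^*$ replaced by the completed tensor algebra $\hat T\Pi V^*$; the only essential difference is combinatorial and is already visible in the two formulas (\ref{twist1}) and (\ref{twist2}). Throughout I regard $\xi\in\Pi V\otimes A_+$ as the $A$-linear derivation fixed before the theorem: the unique continuous $A$-linear derivation of $\hat S\Pi V^*\otimes A$ restricting on the generators $\Pi V^*\cong\Pi V^*\otimes 1$ to the evaluation $\phi\mapsto\langle\phi,\xi\rangle\in A_+$. Two elementary facts drive everything. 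First, $\xi$ strictly lowers symmetric degree and sends generators into $A$, so any two such ``constant'' derivations commute and $\xi^2$ annihilates the generators; in particular $e^{-\xi}$ acts on a generator by the finite substitution $\phi\mapsto\phi-\langle\phi,\xi\rangle$, and $e^{\pm\xi}$ converge to mutually inverse $A$-linear algebra automorphisms since $A_+$ is topologically nilpotent. For part (a) I would not conjugate $m^A$ in isolation but the full square-zero differential $\mathcal D:=m^A+d_A\otimes\id$. That $\mathcal D^2=0$ is immediate: $(m^A)^2=0$, $(d_A\otimes\id)^2=0$, and $m^A$, $d_A\otimes\id$ act on different tensor factors and hence graded-commute. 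Thus $\mathcal D^\xi:=e^\xi\mathcal D e^{-\xi}$ is again an odd $A$-linear derivation with $(\mathcal D^\xi)^2=e^\xi\mathcal D^2 e^{-\xi}=0$, and, writing $\mathcal D^\xi=m^\xi+d_A\otimes\id$, the pair $(V,m^\xi)$ is an $A$-linear $L_\infty$-algebra as soon as $m^\xi$ has vanishing constant term. So (a) reduces to a single constant-term computation.

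The computation is where $d_A$ enters decisively. Because $\xi$ is a constant derivation, the graded commutator $[\xi,d_A\otimes\id]$ is itself the constant derivation attached to $(d_A\otimes\id)\xi\in\Pi V\otimes A_+$, and it commutes with $\xi$; hence $e^\xi(d_A\otimes\id)e^{-\xi}=(d_A\otimes\id)+[\xi,d_A\otimes\id]$, the series terminating. Therefore $m^\xi=e^\xi m^A e^{-\xi}+[\xi,d_A\otimes\id]$. Evaluating $e^\xi m^A e^{-\xi}$ on a generator $\phi$, using $m^A(e^{-\xi}\phi)=m^A\phi$ (as $m^A$ annihilates $A$) and then extracting the $A$-valued component of $e^\xi(m^A\phi)$ — which amounts to substituting $\xi$ into every slot and invoking the invariants/coinvariants identification of Proposition \ref{MCfunctor} to supply the factorials — identifies the constant term of $e^\xi m^A e^{-\xi}$ with $\sum_{i\ge 1}\frac{1}{i!}m^A_i(\xi^{\otimes i})$. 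Adding the constant term $(d_A\otimes\id)\xi$ of $[\xi,d_A\otimes\id]$ produces exactly the left-hand side of the Maurer--Cartan equation, which therefore vanishes iff $\xi\in\MC(V,A)$. This proves (1)(a) and explains why (\ref{twist1}) carries no $d_A$: the $d_A$-contribution is purely a constant term and affects no component $m^\xi_n$ with $n\ge 1$.

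For part (b) it remains to compute those higher components, which agree with the components of $e^\xi m^A e^{-\xi}$. I would use the ``constant shift'' reading of the automorphism: dualizing, $e^{-\xi}$ implements the substitution $\eta\mapsto\eta+\xi$ in the arguments of the structure maps, so $m^\xi_n(x_1,\ldots,x_n)$ is the part of the Taylor expansion of $m^A$ about $\xi$ that is multilinear of degree exactly $n$ in $x_1,\ldots,x_n$, the remaining $i$ slots being filled with copies of $\xi$. In the cocommutative setting the positions of the $\xi$'s are immaterial by graded symmetry of $m^A_{n+i}$, and the symmetric-coalgebra normalization collapses the count to the single factor $\frac1{i!}$, giving (\ref{twist1}); I would cross-check the first terms against the nested-commutator expansion $m^A+[\xi,m^A]+\tfrac12[\xi,[\xi,m^A]]+\cdots$ to pin down signs. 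Part (2) follows verbatim with $\hat S$ replaced by $\hat T$: the argument for (a) is unchanged, while in (b) the map $m^A_{n+i}$ is no longer symmetric, so the $\xi$'s and the $x_j$'s must be summed over all $\binom{n+i}{n}$ order-preserving interleavings, each occurring once and without a factorial — precisely (\ref{twist2}).

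I expect the main obstacle to be exactly the interaction of $e^\xi$ with $d_A\otimes\id$ in the constant-term computation: conjugating $m^A$ alone does not reproduce the Maurer--Cartan expression, and the missing term $(d_A\otimes\id)\xi$ arises only through the commutator $[\xi,d_A\otimes\id]$, so retaining this term is what matches the constant term to $\MC(V,A)$. The secondary difficulty is routine but fiddly: fixing the Koszul signs and reconciling the $\frac1{i!}$ (symmetric case) and the interleaving count (tensor case) with the invariants/coinvariants normalization of Proposition \ref{MCfunctor}.
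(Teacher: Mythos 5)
Your proposal is correct, and its computational core is the same as the paper's: the paper proves (1) in coordinates, writing $\xi=\sum\xi^ix_i$, observing that $e^\xi$ acts by the substitution $f(x)\mapsto f(x+\xi)$, so that $e^\xi\eta e^{-\xi}=\sum f^i(x+\xi)\partial_{x^i}$, and then reading off (\ref{twist1}) from the monomial expansion (with the same explanation of the $\frac{1}{i!}$ via the invariants/coinvariants normalization of Proposition \ref{MCfunctor}) and treating (2) as "similar but simpler" with the interleaving count replacing the factorial — exactly your coordinate-free Taylor-shift argument. Where you genuinely diverge is in your treatment of $d_A\otimes\id$: the paper conjugates $m^A$ alone, never mentions $d_A$ in its proof, and simply asserts that vanishing of the constant term $\sum_{i\geq 1}\frac{1}{i!}m_i(\xi,\ldots,\xi)$ "is equivalent to the Maurer--Cartan condition," even though the Maurer--Cartan equation of Definition 2.1 contains the extra summand $(d_A\otimes\id)(\xi)$. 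Your decomposition — conjugating the full square-zero differential $\mathcal D=m^A+d_A\otimes\id$ and writing $\mathcal D^\xi=m^\xi+d_A\otimes\id$, so that the terminating commutator $[\xi,d_A\otimes\id]$ supplies precisely the missing constant term $(d_A\otimes\id)(\xi)$ — is what actually reconciles the constant-term computation with the paper's own definition of $\MC(V,A)$, and it also yields the square-zero property of the twisted structure for free rather than leaving it implicit; note that this mechanism is not idle pedantry, since in the paper's main application (the universal example $A=(\hat{S}\Pi V^*,m)$ in the proof of Theorem \ref{Lmap}) the differential $d_A=m$ is nonzero and the $d_A$-term is essential to the canonical element being Maurer--Cartan. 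Two small caveats: your $m^\xi$ is $e^\xi m^Ae^{-\xi}+[\xi,d_A\otimes\id]$, which differs from the theorem's literal $m^\xi:=e^\xi m^Ae^{-\xi}$ by a constant derivation — invisible in (\ref{twist1}) and (\ref{twist2}) for $n\geq 1$, but you should state explicitly that this is the reading under which the "no constant term iff Maurer--Cartan" equivalence holds as announced; and the identification of $[\xi,d_A\otimes\id]$ with the constant derivation attached to $+(d_A\otimes\id)(\xi)$ (rather than its negative) depends on the Koszul and coefficient-side conventions, a sign check you flag but defer and which must be carried out to close the argument.
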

\begin{proof}
We will restrict ourselves to proving part (1), the proof for (2) is
similar but simpler. Choose a basis $x_i$ in $V$ and the
corresponding linear coordinates $x^i$ in $\Pi V^*$. Then we can
write $\xi=\sum_i\xi^ix_i$ where $\xi^i\in A$, and any element in
$\tp{\hat{S}\Pi V^*}{A}$ can be written as a formal power series
$f(x^1, \ldots, x^n,\ldots,)$ with coefficients in $A$. The action of
$e^\xi$ on $f$ is then given as $\xi\cdot f=f(x^1+\xi^1,\ldots,
x^n+\xi^n, \ldots)$.

Furthermore, for a derivation $\eta:=\sum f^i(x^1, \ldots,
x^n,\ldots,)\partial_{x^i}$ we have
\begin{equation}\label{conj}
e^{\xi}\eta e^{-\xi}=\sum_if^i(x^1+\xi^1,\ldots,
x^n+\xi^n,\ldots)\partial_{x^i}.
\end{equation}
Let us now take $\eta=m^A$ where $m_A$ is the $A$-linear derivation of
$\tp{\hat{S}\Pi V^*}{A}$ corresponding to a given $L_\infty$-structure on $V$.
We see that the derivation $e^{\xi}\eta e^{-\xi}-(d_A\otimes\id)(\xi)$ has no constant
term (and thus, determines another $L_\infty$-structure) if and only
if 
 $\sum_if^i(\xi^1,\ldots, \xi^n,\ldots)=(d_A\otimes\id)(\xi)$.

The right hand side of (\ref{conj}) is a sum of monomials of the form
\[f_{i_1\ldots i_n}(x^{i_1}+\xi^{i_1})(x^{i_2}+\xi^{i_1})\ldots
(x^{i_n}+\xi^{i_n})\partial_{x_i}\] where $f_{i_1,\ldots, i_n}\in A$.
Every such monomial represents a linear map
$\Pi V^*\ra \tp{\hat{S}\Pi V^*}{A}$ and the components of the dual map
$S^n\Pi V\ra\tp{\Pi V}{A}$ correspond to the matrix elements of $m^\xi$. It follows
that equation (\ref{twist1}) holds; moreover for $n=0$ the constant
part of the derivation $m^\xi$ corresponds to $m_0:
=(d_A\otimes\id)(\xi)+\sum_{i=0}^\infty \frac{1}{i!}m_{i}(\xi,\ldots, \xi)$ and so its
vanishing is indeed equivalent to the Maurer-Cartan condition as
claimed.
\end{proof}
\begin{example}\label{dgla}
One of the most important special cases of the construction above
concerns the case when $V$ is a dgla or dga (as opposed to a general
$L_\infty$- or $A_\infty$-algebra). In that situation the tensoring
with a formal cdga $A$ is unnecessary and we can consider
$\MC(V):=\{\xi\in \Pi V:d\xi+\frac{1}{2}[\xi,\xi]=0$\} (note that this
equation is the same in the associative situation since then
$\frac{1}{2}[\xi,\xi]=\xi^2$)  In this situation it is easy to check that
the corresponding twisted dga or dgla will have the same product as
$V$ and the twisted differential: $d^\xi(x)=dx+[\xi,x]$ for $x\in
V$.
\begin{rem}
Note that the formulas (\ref{twist1}) and (\ref{twist2}) do \emph{not} contain signs;
this happens because our $m_i$'s are viewed as multi-linear maps on $\Pi V$;
accordingly $\xi$ is treated as an \emph{even} element (or constant derivation) and so permuting it past other element does not give rise to signs.
\end{rem}
\end{example}
We have the following easy result; in order to formulate it we
observe that the definition of $\MC(V,A)$ extends in an obvious way
to any
 $A$-linear $L_\infty$- or $A_\infty$-algebra $V$.
Here and sometimes later on we will use the shorthand $V^\xi$ (or
$(\tp{V}{A})^\xi$) to denote the $A$-linear $L_\infty$- or
$A_\infty$-algebra obtained by twisting by a Maurer-Cartan element
$\xi$.
\begin{prop}\label{shift}\
\begin{enumerate}
\item
Let $V$ be an $L_\infty$-algebra, $A$ be a formal cdga and
$\xi\in\MC(V, A)$. Then for $\eta\in \MC(V, A)$ we have
$\eta-\xi\in\MC(V^\xi, A)$ and there is a one-to-one correspondence
$\MC(V, A)\ra\MC(V^\xi, A)$ given by $\eta\mapsto\eta-\xi$.
\item
Let $V$ be an $A_\infty$-algebra, $A$ be a formal dga and
$\xi\in\MC(V, A)$. Then for $\eta\in \MC(V, A)$ we have
$\eta-\xi\in\MC(V^\xi, A)$ and there is a one-to-one correspondence
$\MC(V, A)\ra\MC(V^\xi, A)$ given by $\eta\mapsto\eta-\xi$.
\end{enumerate}
\end{prop}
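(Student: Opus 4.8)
The plan is to reduce the Maurer--Cartan equation for $\eta-\xi$ in the twisted algebra $V^\xi$ directly to the Maurer--Cartan equation for $\eta$ in $V$, by a resummation that uses the explicit twisting formulas (\ref{twist1}) and (\ref{twist2}). Throughout everything is $A$-linear and the elements $\xi,\eta$ are even; since $\tp{\Pi V}{A_+}$ is a vector space, $\eta-\xi$ again lies in it, so the expression $\MC(V^\xi,A)$ is meaningful. Write $\zeta:=\eta-\xi$.

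First I would expand the Maurer--Cartan expression for $\zeta$ in $V^\xi$. In the $L_\infty$-case this is $(d_A\otimes\id)(\zeta)+\sum_{n\geq 1}\frac{1}{n!}m^\xi_n(\zeta^{\otimes n})$. Substituting (\ref{twist1}) for $m^\xi_n$ gives a double sum $\sum_{n\geq 1}\sum_{i\geq 0}\frac{1}{n!\,i!}m^A_{n+i}(\xi^{\otimes i},\zeta^{\otimes n})$, which, after reindexing by the total arity $k=n+i$, becomes
\[
\sum_{k\geq 1}\sum_{n=1}^{k}\frac{1}{n!(k-n)!}\,m^A_k(\xi^{\otimes(k-n)},\zeta^{\otimes n}).
\]
The core of the argument is then the binomial identity
\[
\sum_{n=0}^{k}\frac{1}{n!(k-n)!}\,m^A_k(\xi^{\otimes(k-n)},\zeta^{\otimes n})=\frac{1}{k!}\,m^A_k\big((\xi+\zeta)^{\otimes k}\big)=\frac{1}{k!}\,m^A_k(\eta^{\otimes k}),
\]
valid because $m^A_k$ is multilinear and graded symmetric, and because $\xi,\zeta$ are even, so permuting them produces no signs (consistent with the Remark following Example~\ref{dgla}). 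Isolating the $n=0$ term $\frac{1}{k!}m^A_k(\xi^{\otimes k})$ and recombining with the linear part $(d_A\otimes\id)(\zeta)=(d_A\otimes\id)(\eta)-(d_A\otimes\id)(\xi)$, the entire Maurer--Cartan expression for $\zeta$ in $V^\xi$ equals the Maurer--Cartan expression for $\eta$ in $V$ minus that for $\xi$ in $V$. As $\xi\in\MC(V,A)$ the latter vanishes, so $\zeta\in\MC(V^\xi,A)$ if and only if $\eta\in\MC(V,A)$.

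The $A_\infty$-case (2) runs in parallel and is if anything cleaner: grouping (\ref{twist2}) by total arity $k$, the sum over all $\binom{k}{n}$ in-order interleavings of $n$ copies of $\zeta$ with $k-n$ copies of $\xi$ is exactly the multilinear expansion of $m^A_k(\eta^{\otimes k})=m^A_k((\xi+\zeta)^{\otimes k})$ restricted to the sequences containing at least one $\zeta$; here only multilinearity is used, no symmetry of $m^A_k$. The same cancellation against the Maurer--Cartan expression of $\xi$ then applies. Finally, the correspondence $\eta\mapsto\eta-\xi$ is bijective with inverse $\zeta\mapsto\zeta+\xi$: injectivity is immediate since it is a translation, and the equivalence just established shows that the inverse translation indeed lands in $\MC(V,A)$, giving surjectivity.

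I expect the only real obstacle to be the bookkeeping of the resummation: justifying the interchange of the sums over $n$ and $i$ (legitimate by $A_+$-adic completeness, so every fixed arity $k$ receives only finitely many contributions) and verifying that the product of factorials $\frac{1}{n!(k-n)!}$ recombines precisely into $\frac{1}{k!}\binom{k}{n}$. In the $L_\infty$-case one must additionally confirm that the $\frac{1}{n!}$ factorials — which encode the identification of $S_n$-invariants with coinvariants used in Proposition~\ref{MCfunctor} — are compatible with evaluating the symmetric $m^A_k$ on repeated even arguments; but since $\xi$ and $\zeta$ carry no signs, this collapses to the ordinary binomial theorem and raises no genuine difficulty.
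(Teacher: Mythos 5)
Your proof is correct, but it takes a genuinely different route from the paper's. The paper's own argument is a two-line conjugation computation: writing $m^\xi=e^\xi m^A e^{-\xi}$ and using that $\xi$ and $\eta-\xi$ are commuting constant derivations (so the exponentials compose, $e^{\eta-\xi}e^{\xi}=e^{\eta}$), it obtains $e^{\eta-\xi}m^\xi e^{-(\eta-\xi)}=e^{\eta}m^A e^{-\eta}$ and then invokes the characterization from the twisting theorem, namely that an even element is Maurer--Cartan exactly when the conjugated derivation has no constant term; reversibility, and hence the bijection, is immediate since translation is invertible. You instead work at the level of structure maps: expanding the Maurer--Cartan expression of $\zeta=\eta-\xi$ in $V^\xi$ via the formulas (\ref{twist1}) and (\ref{twist2}) and resumming by total arity $k$, you reduce everything to the binomial identity in the symmetric case and to the count of in-order interleavings in the associative case, correctly noting that evenness of $\xi$ and $\zeta$ kills all Koszul signs and that $A_+$-adic completeness legitimizes the rearrangement (each arity receives finitely many contributions). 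Each approach buys something: the paper's is shorter and avoids all series bookkeeping, but it leans on the automorphism picture of twisting and the ``vanishing constant term'' criterion; yours is self-contained from the definitions, produces the stronger pointwise identity that the Maurer--Cartan expression of $\eta-\xi$ in $V^\xi$ equals $\MC$-expression of $\eta$ in $V$ minus that of $\xi$ in $V$ (not merely the equivalence of their vanishing), and treats the $(d_A\otimes\id)$-term transparently by linearity, a point the conjugation argument handles only implicitly. Both establish the bijectivity claim the same way, as invertibility of a translation combined with the two-sided equivalence.
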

\begin{proof} We will only give a proof for (1) since it will also
work for (2) without any changes.
Denote by $m$ the given $L_\infty$-structure on $V$. If $\eta\in
\MC(V,A)$ then
\begin{equation*}
e^{\eta-\xi}m^\xi e^{-\eta+\xi}=e^{\eta-\xi}e^\xi m e^{-\xi}
e^{-\eta+\xi}=e^{\eta} m  e^{-\eta}.
\end{equation*}
Therefore $e^{\eta-\xi}m^\xi e^{-\eta+\xi} - d_A\otimes \id$ has no constant term and
${\eta-\xi}\in \MC(V^\xi, A)$. This argument is clearly reversible so
${\eta-\xi}\in \MC(V^\xi, A)$ if and only if $\eta\in \MC(V, A)$.
\end{proof}
\begin{rem}\label{nonform2}
In the next section we will need the analogue of the above result
when $V$ is a dgla and $A$ is a not necessarily formal (see Remark
\ref{nonform}). The proof will be the same, verbatim.
\end{rem}

The twisted infinity structures are compatible with invariant scalar
products in the following sense.
\begin{prop}\label{cyclictwist}\
\begin{enumerate}
\item
Let $(V,m)$ be a cyclic $L_\infty$-algebra, $A$ be a formal cdga and
$\xi\in\MC(V, A)$. Then $(V, m^\xi)$ is
an $A$-linear
 cyclic $L_\infty$-algebra.
\item
Let $(V,m)$ be a cyclic $A_\infty$-algebra, $A$ be a formal dga and
$\xi\in\MC(V, A)$. Then $(V, m^\xi)$ is
an $A$-linear cyclic $A_\infty$-algebra.
\end{enumerate}
\end{prop}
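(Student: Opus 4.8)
The plan is to prove statement (1); statement (2) follows by the same argument with the symmetric algebra $\hat S\Pi V^*$ replaced by the tensor algebra $\hat T\Pi V^*$ and the obvious changes. The key is to recast cyclicity as a geometric condition on the derivation $m$, following the formal-symplectic-geometry formalism of \cite{HL} already invoked in the excerpt. A graded symmetric, non-degenerate inner product $\langle,\rangle$ on $V$ is the same datum as a (constant, even, non-degenerate) symplectic form $\omega$ on the formal manifold with function algebra $\hat S\Pi V^*$. In this language the cyclicity condition---that $\langle m_i(x_1,\ldots,x_i),x_{i+1}\rangle$ be cyclically invariant---is precisely the statement that the odd derivation $m$ \emph{preserves} $\omega$, equivalently that $m$ is a \emph{symplectic} (Hamiltonian) vector field: $\iota_m\omega$ is closed, and by the Poincar\'e lemma in formal symplectic geometry exact, so $m$ is generated by a Hamiltonian function. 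Thus the first step is to record this dictionary: $(V,m)$ cyclic $\Longleftrightarrow$ $m$ is a symplectic derivation of $\hat S\Pi V^*$.

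\medskip\noindent
Once cyclicity is expressed as symplecticity of $m$, the proof reduces to showing that twisting preserves the class of symplectic derivations. Here I would use the first description of twisting given in the text, namely $m^\xi = e^\xi m^A e^{-\xi}$, rather than the shift-of-arguments formula (\ref{twist1}). The point is that $e^\xi$ is conjugation by the automorphism of $\tp{\hat S\Pi V^*}{A}$ induced by the $A$-linear derivation $\xi$. The second step is therefore to verify that $e^\xi$ is a \emph{symplectomorphism}, i.e.\ that it preserves the ($A$-linearly extended) form $\omega$. Since $\xi$ is a constant derivation (translation by $\xi$ in the coordinates, as in the proof of the preceding Theorem), and $\omega$ is a constant-coefficient form, the flow $e^{t\xi}$ is a translation and translations manifestly preserve a constant symplectic form; concretely $\mathcal L_\xi\omega=0$ because $\xi$ has constant coefficients and $\omega$ is closed with constant coefficients. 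The third step is then purely formal: a symplectomorphism carries symplectic vector fields to symplectic vector fields, so if $m^A$ is symplectic (which it is, being the scalar extension of the symplectic derivation $m$) then so is $m^\xi = e^\xi m^A e^{-\xi}$. By the dictionary of the first step applied over $A$, this says exactly that $(V,m^\xi)$ is an $A$-linear cyclic $L_\infty$-algebra, which is the claim.

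\medskip\noindent
The main obstacle I expect is the verification that $\xi$, viewed as an $A$-linear derivation, really does act as a symplectomorphism---in other words, that the invariance of $\langle,\rangle$ is compatible with translation by an \emph{even} Maurer-Cartan element living in $\tp{\Pi V}{A_+}$ rather than in $\Pi V$ itself. One must check that extending $\omega$ $A$-linearly and extending $\xi$ by the $A$-linear Leibniz rule are mutually compatible, so that the constant-coefficient/translation argument survives base change to $A$; the even parity of $\xi$ (emphasized in the Remark after Example~\ref{dgla}) is what guarantees no anomalous signs appear when $\xi$ is permuted past the arguments of $\omega$. A cleaner alternative, avoiding the symplectic-geometry language altogether, is to argue directly from formula (\ref{twist1}): one substitutes the twisted brackets into the expression $\langle m^\xi_n(x_1,\ldots,x_n),x_{n+1}\rangle$, expands using cyclic invariance of each $\langle m_{n+i}(\cdots),\cdot\rangle$ on the nose, and uses that $\xi$ occupies the padded slots symmetrically and is even, so that a cyclic permutation of $x_1,\ldots,x_{n+1}$ merely permutes the copies of $\xi$ among themselves. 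This reindexing argument is elementary but sign-delicate; the symplectic formulation is what makes the result transparent, so I would present that as the primary proof and relegate the bracket computation to a remark.
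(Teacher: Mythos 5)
Your proposal is correct and matches the paper's own proof essentially step for step: the paper likewise identifies cyclicity with $m$ being a symplectic vector field on the formal $A$-supermanifold $\hat{S}\Pi(\tp{V}{A})^*$, observes that the adjoint action of the constant vector field $\xi$ is a translation and hence preserves symplectic vector fields, and concludes that $m^\xi$ is symplectic; it even records, as you do, the alternative direct verification of cyclic invariance via formula (\ref{twist1}). Nothing further is needed.
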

\begin{proof}
The proof is the same for both cases; let us therefore consider the
$L_\infty$-case. Recall that $\hat{S}\Pi(\tp{V}{A})^*$ is a formal
$A$-supermanifold with a symplectic structure determined by a given
scalar product on $V$ and that $m$ is a symplectic vector field. The
adjoint action of the constant vector field $\xi$ is a translation;
therefore it preserves symplectic vector fields. It follows that
$m^\xi$ is symplectic as required.

Alternatively, one can check directly, using formulas (\ref{twist1})
that $m^\xi$ has the desired cyclic invariance property.
\end{proof}

\subsection{Maurer-Cartan elements in Hochschild and
Chevalley-Eilenberg complexes}\label{MCsection}

Let $V$ be a graded vector space and $A$ be a formal cdga. Recall that an
$A$-linear $L_\infty$-structure on $V$ is an odd (continuous)
$A$-linear derivation $m$ of the formal cdga $\tp{\hat{S}\Pi V^*}{A}$ without
constant term and of square zero. Put another way, $m\in\MC(\g,A)$, where
$\g=\Der_0(\hat{S}\Pi V^*)$, the graded Lie algebra of
continuous derivations
 of $\hat{S}\Pi V^*$ without constant term.
Let us
 consider the corresponding
 $A$-linear dgla $(\tp{\g}{A})^m$ obtained by twisting the
 graded Lie
 algebra $\tp{\g}{A}$ by $m$;
  recall that the twisted differential will have the form
  $d(x)=[m,x]$ for $x\in \tp{\g}{A}$. By definition, the dgla
  $(\tp{\g}{A})^m$ is isomorphic to
  the truncated Chevalley-Eilenberg complex
   $\overline{\CE}_A(V,V)=\tp{\overline{\CE}(V,V)}{A}$ of the
   $A$-linear $L$-infinity algebra $(V,m)$.

Similarly, an $A$-linear $A_\infty$-algebra structure on $V$ is a
Maurer-Cartan element $m$ in the graded Lie algebra $\tp{\h}{A}$,
where $\h=\Der_0(\hat{T}\Pi V^*)$ is the graded Lie algebra of  continuous derivations of
the formal dga $\hat{T}\Pi V^*$ with vanishing constant term; the
twisted dgla $(\tp{\h}{A})^m$ is just the truncated Hochschild
complex $\overline{\Hoch}_A(V,V)=\tp{\overline{\Hoch}(V,V)}{A}$ of
$(V,m)$.

These observations have cyclic versions: suppose that $(V,m)$ is an
$A$-linear cyclic $L_\infty$-algebra. That means that the space $V^*$
is supplied with a (graded) symplectic structure and that $m$ is a
symplectic derivation. Consider the Lie subalgebra $\tilde{\g}$ of
$\g$ consisting of symplectic derivations; then
$m\in\MC(\tilde{\g},A)$ and  the twisted dgla
$(\tp{\tilde{\g}}{A})^m$ is identified with the truncated cyclic complex
$\overline{\CE}_A(V)=\tp{\overline{\CE}(V)}{A}$ of $(V,m)$.

Similarly if $(V,m)$ is a $A$-linear cyclic $A_\infty$-algebra then
the space $V^*$ is supplied with a (graded) symplectic structure and
$m$ is a (noncommutative) symplectic derivation. Consider the Lie
subalgebra $\tilde{\h}$ of $\h$ consisting of symplectic derivations;
then $m\in\MC(\tilde{\g},A)$ and the twisted dgla $(\tp{\tilde{\g}}{A})^m$ is identified with the truncated cyclic complex
$\overline{\CHoch}_A(V,V)=\tp{\overline{\CHoch}(V,V)}{A}$ of
$(V,m)$.

Then we have the following result which is a direct consequence of
Proposition \ref{shift}.

\begin{prop}\label{propshift}Let $A$ be a cdga:
\begin{enumerate}
\item
\begin{enumerate}
\item
Let $(V,m)$ be an $L_\infty$-algebra. Then there is a one-to-one
correspondence between the set of $A$-linear $L_\infty$-structures on $V$ and $\MC(\overline{\CE}(V,V),A)$; namely any $A$-linear
$L_\infty$-structure $m^\prime$ determines an MC-element
$m^\prime-m\in\MC(\overline{\CE}(V,V),A)$.
\item
Let $(V,m)$ be a cyclic $L_\infty$-algebra. Then~ there~~ is a
one-to-one correspondence between the set of $A$-linear cyclic
$L_\infty$-structures on
 $V$ and $\MC(\overline{\CE}(V),A)$; namely
any $A$-linear cyclic $L_\infty$-structure $m^\prime$ determines an
MC-element $m^\prime-m\in\MC(\overline{\CE}(V),A)$.

\end{enumerate}
\item
\begin{enumerate}
\item
Let $(V,m)$ be an $A_\infty$-algebra. Then there is a one-to-one
correspondence between the set of $A$-linear $A_\infty$-structures on
$V$ and $\MC(\overline{\Hoch}(V,V),A)$; namely any $A$-linear
$A_\infty$-structure $m^\prime$ determines an MC-element
$m^\prime-m\in\MC(\overline{\Hoch}(V,V),A)$.
\item
Let $(V,m)$ be a cyclic $A_\infty$-algebra. Then there is a
one-to-one correspondence between the set of $A$-linear cyclic
$A_\infty$-structures  on
 $V$ and $\MC(\overline{\CHoch}(V,V),A)$;
namely any $A$-linear cyclic $A_\infty$-structure $m^\prime$
determines an MC-element $m^\prime-m\in\MC(\overline{\CHoch}(V,V),
A)$.
\end{enumerate}
\end{enumerate}
\end{prop}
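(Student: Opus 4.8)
The plan is to recognize that Proposition~\ref{propshift} is essentially a restatement of Proposition~\ref{shift} through the dictionary established in the preceding discussion, so the proof should be short. First I would fix attention on case (1)(a), since all four cases are formally identical once the correct identification is in place. Recall from the text immediately above that an $A$-linear $L_\infty$-structure on $V$ is precisely a Maurer-Cartan element in the graded Lie algebra $\tp{\g}{A}$, where $\g=\Der_0(\hat{S}\Pi V^*)$; and that the \emph{given} structure $m$ is itself such an element. The key observation is that twisting $\g$ by $m$ produces the dgla $(\tp{\g}{A})^m$, which \emph{by definition} is identified with $\overline{\CE}_A(V,V)=\tp{\overline{\CE}(V,V)}{A}$. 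So the objects on the two sides of the claimed bijection live naturally in $\MC(\g,A)$ and in $\MC(\g^m,A)$ respectively.

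Next I would simply invoke Proposition~\ref{shift}(1) with the graded Lie algebra $\g$ playing the role of ``$V$'' there and with the distinguished Maurer-Cartan element $m$ playing the role of ``$\xi$''. That proposition gives a one-to-one correspondence $\MC(\g,A)\ra\MC(\g^m,A)$ sending $m^\prime\mapsto m^\prime-m$. Unwinding the identifications, the source $\MC(\g,A)$ is exactly the set of $A$-linear $L_\infty$-structures $m^\prime$ on $V$, and the target $\MC(\g^m,A)=\MC(\overline{\CE}(V,V),A)$ is the Maurer-Cartan set of the truncated Chevalley-Eilenberg complex. Hence $m^\prime\mapsto m^\prime-m$ is the asserted bijection, establishing (1)(a).

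For the cyclic case (1)(b) I would repeat the argument verbatim, replacing $\g$ by the Lie subalgebra $\tilde{\g}$ of symplectic derivations; here one must know that twisting stays inside the symplectic world, but this is precisely the content of Proposition~\ref{cyclictwist}, so $m^\prime-m$ remains a symplectic (cyclic) Maurer-Cartan element and lands in $\MC(\overline{\CE}(V),A)$. The $A_\infty$-cases (2)(a) and (2)(b) follow identically upon substituting $\h=\Der_0(\hat{T}\Pi V^*)$, respectively $\tilde{\h}$, for $\g$, using Proposition~\ref{shift}(2) in place of part (1).

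One technical caveat worth flagging is the coefficient algebra: the text speaks of formal cdgas when setting up $\MC(\g,A)$, whereas Proposition~\ref{propshift} is stated for a general cdga $A$. The main (and really only) obstacle is therefore to make sure the correspondence of Proposition~\ref{shift} is available in this slightly broader non-formal setting; but this is exactly the situation addressed in Remark~\ref{nonform2}, which asserts that the proof of Proposition~\ref{shift} goes through verbatim when $A$ is not necessarily formal and $\g$ (a dgla of derivations) has the requisite finiteness. So I would cite Remark~\ref{nonform2} to license the application and thereby close all four cases with essentially no new computation.
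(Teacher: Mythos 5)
Your proposal is correct and follows exactly the paper's route: the paper offers no separate argument, stating the proposition as ``a direct consequence of Proposition~\ref{shift}'' via the identifications set up in Section~\ref{MCsection} (an $A$-linear structure is an MC-element of $\tp{\g}{A}$, resp.\ $\tp{\h}{A}$, $\tp{\tilde\g}{A}$, $\tp{\tilde\h}{A}$, and the twisted dgla $(\tp{\g}{A})^m$ is the truncated complex), which is precisely your argument. Your caveat about non-formal coefficient algebras is also the paper's own, handled there by Remarks~\ref{nonform2} and~\ref{nonform3} just as you cite.
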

\noproof
\begin{rem}\label{nonform3}
There is an obvious analogue of the above result in the situation
when $A$ is a not necessarily formal cdga, cf. Remark \ref{nonform2}.
\end{rem}

\section{Main construction}
In this section we describe an $L_\infty$-map ${\mathbf
f}=(f_1,f_2,\ldots)$ from an $L_\infty$- or $A_\infty$-algebra to its
truncated  Chevalley-Eilenberg or Hochschild complex as well as the analogues
in the cyclic situation. The first component $f_1$ is the map $f$ appearing in the homotopy fiber sequences (1.1)-(1.4).

\begin{theorem}\label{Lmap}\
\begin{enumerate}
\item
\begin{enumerate}
\item
Let $(V,m)$ be an $L_\infty$-algebra.
Then the maps $f_r:(\Pi
V)^{\otimes r}\ra \Pi \overline{\CE}(V,V)$ given by the formulas
\begin{equation}\label{Linf1}
f_r(w_1,\ldots, w_r)(x_1,\ldots, x_n)= m_{r+n}(w_1,\ldots,
w_r,x_1,\ldots, x_n), \qquad (w_i, x_i\in \Pi V).
\end{equation}
define an $L_\infty$-map $\mathbf f: V\to \overline{\CE}(V,V)$.
The map $\mathbf f$ is characterized by the property
that for any formal algebra $A$ an MC-element $\xi\in\MC(V,A)$ is
mapped to an MC-element in $\MC(\overline{\CE}(V,V),A)$
corresponding to the twisting of $\tp{V}{A}$ by the element $\xi$.
\item
Let $(V,m)$ be a cyclic $L_\infty$-algebra.
Then the maps $f_r:(\Pi V)^{\otimes r}\ra \Pi \overline{\CE}(V)$ are given by the formulas (\ref{Linf1})
define an $L_\infty$-map $\mathbf f: V\to \overline{\CE}(V)$.
 The map $\mathbf f$ is characterized by the property
that for any formal algebra $A$ an MC-element $\xi\in\MC(V,A)$ is
mapped to an MC-element in $\MC(\overline{\CE}(V),A)$
corresponding to the twisting of $\tp{V}{A}$ by the element $\xi$.
\end{enumerate}
\item
\begin{enumerate}
\item
Let $(V,m)$ be an $A_\infty$-algebra.
Then the maps
$f_r:(\Pi V)^{\otimes r}\ra \Pi \overline{\Hoch}(V,V)$ given by
the formulas
\begin{equation}\label{Linf2}
f_r(w_1,\ldots, w_r)(x_1,\ldots, x_n)= \sum_{\sigma\in S_r}m_{r+n}(\sigma(w_1\otimes\ldots\otimes
w_r)|x_1,\ldots, x_n).
\end{equation}
define an $L_\infty$-map $\mathbf f: V\to \overline{\Hoch}(V,V)$. Here
$m_{r+n}(w_{i_1},\ldots,w_{i_r}|x_1,\ldots,x_n):=\sum \pm
m_{r+n}(z_1,\ldots,z_{r+n})$
where the sum ranges over all ${r+n}\choose{s}$ shuffles of the
sequences $w_{i_1},\ldots,w_{i_r}$ and $x_1,\ldots,x_n$, and the sign is
determined by the Koszul sign rule.
 The map $\mathbf f$ is characterized by
the property that for any formal algebra $A$ an MC-element
$\xi\in\MC(V,A)$ is mapped to an MC-element in
$\MC(\overline{\Hoch}(V,V),A)$ corresponding to the twisting of
$\tp{V}{A}$ by the element $\xi$.
\item
Let $(V,m)$ be an cyclic $A_\infty$-algebra.
Then the maps
$f_r:(\Pi V)^{\otimes r}\ra \Pi \overline{\CHoch}(V,V)$ given by
the formulas (\ref{Linf2})
define an $L_\infty$-map $\mathbf f: V\to \overline{\CHoch}(V,V)$.
 The map $\mathbf f$ is characterized by the
property that for any formal algebra $A$ an MC-element
$\xi\in\MC(V,A)$ is mapped to an MC-element in
$\MC(\overline{\CHoch}(V,V),A)$ corresponding to the twisting of
$\tp{V}{A}$ by the element $\xi$.
\end{enumerate}
\end{enumerate}
\end{theorem}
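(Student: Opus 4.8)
The plan is to avoid verifying the $L_\infty$-relations for $\mathbf f$ by hand, and instead to pin down $\mathbf f$ through its effect on Maurer--Cartan elements, using the Yoneda-type correspondence of the Corollary following Proposition \ref{MCfunctor}. Each of the four targets $\overline{\CE}(V,V)$, $\overline{\CE}(V)$, $\overline{\Hoch}(V,V)$ and $\overline{\CHoch}(V,V)$ is a dgla, hence in particular an $L_\infty$-algebra; so by that Corollary an $L_\infty$-map from $V$ into any of them is the same datum as a morphism of Maurer--Cartan functors $\MC(V,-)\to\MC(\overline{\CE}(V,V),-)$, and similarly in the three remaining cases. In the $A_\infty$ cases I regard the source $V$ through its underlying $L_\infty$-structure (Remark \ref{A-L}), which does not change $\MC(V,-)$; this is exactly what permits $\mathbf f$ to be an $L_\infty$-map although $V$ is only an $A_\infty$-algebra.

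First I would construct the morphism of functors directly. For a formal (c)dga $A$ and $\xi\in\MC(V,A)$, the twisting theorem produces an $A$-linear (cyclic, in cases (1)(b) and (2)(b)) structure $m^\xi$ on $V$, and Proposition \ref{propshift} identifies $m^\xi-m$ with an element of $\MC(\overline{\CE}(V,V),A)$ (respectively $\MC(\overline{\CE}(V),A)$, $\MC(\overline{\Hoch}(V,V),A)$, $\MC(\overline{\CHoch}(V,V),A)$); in the cyclic cases Proposition \ref{cyclictwist} guarantees that $m^\xi$, and hence $m^\xi-m$, lands in the cyclic subcomplex. I would take the morphism of functors to be $\xi\mapsto m^\xi-m$. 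Naturality in $A$ is immediate, since both twisting and the identification of Proposition \ref{propshift} are expressed through the base-changed operations $m^A_i$ and therefore commute with every map $A\to B$ of formal (c)dgas. Applying the Corollary, this morphism is induced by a unique $L_\infty$-map $\mathbf f$; the asserted universal property then holds by construction, and uniqueness of $\mathbf f$ is the injectivity half of the same Corollary.

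It then remains to identify the Taylor components of this abstract $\mathbf f$ with the explicit formulas (\ref{Linf1}) and (\ref{Linf2}). For this I would unwind the representability isomorphism of Proposition \ref{MCfunctor}: composing the $L_\infty$-morphism $\mathbf f$ with a point $\xi\in\MC(V,A)$ expresses the resulting Maurer--Cartan element as an explicit series in the components $f_r$ evaluated on copies of $\xi$, and I would match this, graded by the number of occurrences of $\xi$, against the twisting formula (\ref{twist1}) in the $L_\infty$ case and (\ref{twist2}) in the $A_\infty$ case. In the $L_\infty$ case the graded symmetry of $m$ turns each term into a single symmetric bracket and yields (\ref{Linf1}) with no symmetrization; in the $A_\infty$ case $m$ is not symmetric, so the same matching produces instead a sum over all interleavings of the $\xi$'s among the $x_j$ (the shuffle sum of (\ref{twist2})), and recovering a genuinely graded-symmetric component $f_r$ by polarizing its value on the diagonal $\xi^{\otimes r}$ forces the averaging $\frac{1}{r!}\sum_{\sigma\in S_r}$ together with the Koszul signs appearing in (\ref{Linf2}).

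The step I expect to be the main obstacle is purely combinatorial: making the normalizations match on the nose. One must reconcile the factorials coming from the invariants/coinvariants identification used in Proposition \ref{MCfunctor}, the $\frac{1}{i!}$ carried by the symmetric twisting formula (\ref{twist1}), and the shuffle-and-sign conventions underlying (\ref{twist2}) and (\ref{Linf2}); it is precisely the interplay of these that dictates the symmetrization factor and the Koszul signs of (\ref{Linf2}) in the $A_\infty$ case. Finally, since the entire discussion of Section 2 is available for not necessarily formal $A$ (Remarks \ref{nonform}, \ref{nonform2}, \ref{nonform3}), the same argument applies verbatim in that generality, which is the form needed in the subsequent sections.
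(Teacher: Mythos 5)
Your proposal is correct and follows essentially the same route as the paper: reduce via representability of $\MC(V,-)$ and Yoneda to constructing the natural transformation $\xi\mapsto m^\xi-m$ (using the twisting theorem, Proposition \ref{propshift} in its non-formal version, and Proposition \ref{cyclictwist} in the cyclic cases), then identify the components $f_r$ by matching against the twisting formulas (\ref{twist1}) and (\ref{twist2}), with the invariants/coinvariants factorials accounting for the normalizations and the symmetrization in (\ref{Linf2}). The paper carries out your ``grading by occurrences of $\xi$'' step cleanly by evaluating at the universal MC-element $\sum w_i\otimes w^i$ over $A=\hat{S}\Pi V^*$, which is exactly the polarization you gesture at, and it makes explicit one small point worth noting: neither $m$ nor $m^\xi$ lies in $\tp{\Pi\overline{\CE}(V,V)}{A_+}$, but their difference does.
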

\noproof
\begin{rem}
The maps $\mathbf f$ in the $L_\infty$- and
$A_\infty$-contexts are compatible in the following sense.

Let $(V,m)$ be an $A_\infty$-algebra; it could be regarded as an
$L_\infty$-algebra, cf. Remark \ref{A-L}. Furthermore, extending this
observation, one sees that there are maps of dglas $\Hoch(V,V)\ra
\CE(V,V)$ and $\overline{\Hoch}(V,V)\ra\overline{\CE}(V,V)$. Then
straightforward inspection shows that there is a commutative diagram
of $L_\infty$-algebras and maps:
\[
\xymatrix{
V\ar^-{\mathbf f}[r]\ar^\cong[d]&\overline{\Hoch}(V,V)\ar[d]\ar[r]&\Hoch(V,V)\ar[d]\\
V\ar^-{\mathbf f}[r]&\overline{\CE}(V,V)\ar[r]&\CE(V,V) }
\]
Moreover, a similar statement also holds in the cyclic context.
\end{rem}

\subsection{Proof of Theorem \ref{Lmap}: $L_\infty$-algebra case}
Let $(V,m)$ be an $L_\infty$-structure on a graded vector space $V$.
To construct an $L_\infty$-map $V\ra\overline{\CE}(V,V)$ it is
sufficient to exhibit a map
\[
\MC(V,A)\rightarrow \MC(\overline{\CE}(V,V),A)
\]
for any formal cdga $A$, which is functorial in $A$. Indeed, the
functor $A\mapsto \MC(V,A)$ is represented by the cdga $(\hat{S}\Pi
V^*,m)$ and thus, by Yoneda's lemma such a functorial map is induced
by a cdga map $\hat{S}\Pi \overline{\CE}(V,V)^*\rightarrow \hat{S}\Pi
V^*$ i.e. the desired $L_\infty$-map.

Let $\xi\in\MC(V,A)$; it gives rise via twisting to the $A$-linear
$L_\infty$-algebra $V^\xi$ and hence, by Proposition~\ref{propshift}
(more precisely, its version described in Remark \ref{nonform}), to
an MC element $m^\xi-m\in\MC(\overline{\CE}(V,V),A)$.
Note that while in general neither $m$ nor $m^\xi$ is
contained in $A_+\otimes\Pi\overline{\CE}(V,V)$, their difference is.
  So the
correspondence $\xi\mapsto m^\xi-m$ defines the desired map
$\MC(V,A)\rightarrow \MC(\overline{\CE}(V,V),A)$.

We will now describe the map $\mathbf f$ explicitly. To do this we
perform the above argument for the universal example $A=(\hat{S}\Pi
V^*,m)$. Consider the canonical MC-element
 $\xi=\sum \tp{w_i}{w^i} \in \tp{\Pi V}{\hat{S}\Pi V^*}$, where
 $\{w_i\}$ is a basis of $\Pi V$
 and $\{w^i\}$ its dual basis in $\Pi V^*\subset \hat{S}\Pi V^*$. The
 corresponding twisted $A$-linear
 $L_\infty$-structure on $\tp{V}{A}$ is given by
\begin{eqnarray}
\nonumber m^\xi_n(x_1,\ldots,x_n) & = &
\sum_{r=0}^{\infty} \frac{1}{r!} m^{\hat{S}\Pi
V^*}_{r+n}\left(\underbrace{\xi,\ldots,\xi}_r,x_1,\ldots,x_n\right)
\\
\nonumber & = &
\sum_{r=0}^{\infty}\frac{1}{r!} m^{\hat{S}\Pi
V^*}_{r+n}\left(\sum_{i_1} \tp{w_{i_1}}{w^{i_1}},\sum_{i_2}
\tp{w_{i_2}}{w^{i_2}},\ldots, \sum_{i_r} \tp{w_{i_r}}{w^{i_r}},
x_1,\ldots,x_n\right) \\
&=& \label{sum} \sum_{\stackrel {r\geq 0}
{i_1,\ldots,i_r}}\frac{1}{r!}
\tp{m_{r+n}(w_{i_1},\ldots,w_{i_r},x_1,\ldots,x_n)}{w^{i_r}\ldots
w^{i_1}},
\end{eqnarray}
for $x_1,\ldots,x_n\in \Pi V$.

This structure is interpreted as an MC-element $\tilde\xi =
(\tilde\xi_n)$ in the dgla
\[
\tp{\overline{\CE}(V,V)}{\hat{S}\Pi V^*} = \bigoplus_{n\geq 1}
\tp{\Hom(S^n \Pi V, \Pi V)}{\hat{S}\Pi V^*},
\]
where
\[
\tilde\xi_n(x_1,\ldots, x_n) = \sum_{\stackrel {r\geq 1}
{i_1,\ldots,i_r}}\frac{1}{r!}
\tp{m_{r+n}(w_{i_1},\ldots,w_{i_r},x_1,\ldots,x_n)}
{w^{i_r}\ldots w^{i_1}}.
\]
(Note that the only the $r>0$ terms of the sum (\ref{sum}) contribute
here.) Finally, interpreting the MC-element $\tilde\xi\in
\tp{\overline{\CE}(V,V)}{\hat{S}\Pi V^*}$ as an $L_\infty$-map
${\mathbf f}=(f_r)$ from $V$ into
\[
\overline{\CE}(V,V)= \bigoplus_{n\geq 1}\Hom(S^n\Pi V,\Pi V),
\]
 we obtain the desired formula (\ref{Linf1}).
The factorial disappears in (\ref{Linf1}) for the same reason that it appears in the
proof of part (1) of Proposition~\ref{MCfunctor}.

Now assume that $V$ is a \emph{cyclic}
$L_\infty$-algebra. Then the above $L_\infty$-map
$V\ra\overline{\CE}(V,V)$  factors
through $\overline{\CE}(V)$.
This follows directly from the fact that a Maurer-Cartan twisting of
a cyclic $L_\infty$-algebra is again cyclic, cf.
Proposition \ref{cyclictwist}.

\subsection{Proof of Theorem \ref{Lmap}: $A_\infty$-algebra case}
Now we turn to the closely related associative case. Let $(V,m)$ be
an $A_\infty$-algebra. We will construct an $L_\infty$-map from the
associated $L_\infty$-algebra $(V,\overline{m})$ (cf.
Remark~\ref{A-L})
 to the dgla $\overline{\Hoch}(V,V)$.

To construct such a map it suffices to
 exhibit a map
\[
\MC(V,A)\rightarrow \MC(\overline{\Hoch}(V,V),A)
\]
for any formal cdga $A$, which is functorial in $A$. Indeed, the
functor $A\mapsto \MC(V,A)$ is represented by the cdga $(\hat{S}\Pi
V^*,\overline{m})$ and thus, by Yoneda's lemma, such a functorial map
is induced by a cdga map $\hat{S}\Pi
\overline{\Hoch}(V,V)^*\rightarrow \hat{S}\Pi V^*$ i.e. the desired
$L_\infty$-map.

Let $\xi\in\MC(V,A)$; it gives rise to a twisted $A$-linear
$A_\infty$-algebra $(\tp{V}{A})_\xi$. Hence it determines the
MC-element $m^\xi-m$ in the dgla $\tp{\overline{\Hoch}(V,V)}{A}$.
 So the correspondence $\xi\mapsto m^\xi - m$
defines the desired map $\MC(V,A)\rightarrow
\MC(\overline{\Hoch}(V,V),A).$

We will now describe the map $\mathbf f:=(f_1,f_2,\ldots)$
explicitly.
Consider the canonical MC-element
 $\xi=\sum \tp{w_i}{w^i} \in \tp{\Pi V}{\hat{S}\Pi V^*}$, where
 $\{w_i\}$ is a basis of $\Pi V$
 and $\{w^i\}$ its dual basis in $\Pi V^*\subset \hat{S}\Pi V^*$. The
 corresponding twisted $A$-linear
 $A_\infty$-structure on $\tp{V}{\hat{S}\Pi V^*}$ is given by the
 product
\begin{eqnarray}
\nonumber
 m_n^\xi(x_1,\ldots,x_n)
& = &
\sum_{r=0}^{\infty} m^{\hat{S}\Pi
V^*}_{r+n}\left(\underbrace{\xi,\ldots,\xi}_r|x_1,\ldots,x_n\right)
\\
\nonumber & = &
\sum_{r=0}^{\infty}m^{\hat{S}\Pi V^*}_{r+n}\left(\sum_{i_1}
\tp{w_{i_1}}{w^{i_1}},\sum_{i_2} \tp{w_{i_2}}{w^{i_2}},\ldots,
\sum_{i_r} \tp{w_{i_r}}{w^{i_r}}|x_1,\ldots,x_n\right) \\
&=& \label{sum1} \sum_{\stackrel {r\geq 0} {i_1,\ldots,i_r}}
\tp{m_{r+n}\left(w_{i_1},\ldots,w_{i_r}|x_1,\ldots,x_n\right)}
{w^{i_r}\ldots w^{i_1}},
\end{eqnarray}
for $x_1,\ldots,x_n\in\Pi V$.
This structure is interpreted as a MC-element $\tilde\xi =
(\tilde\xi_n)$ in the dgla
\[
\tp{\overline{\Hoch}(V,V)}{\hat{S}\Pi V^*} = \bigoplus_{n\geq
1}\tp{\Hom(T^n\Pi V, \Pi V)}{\hat{S}\Pi V^*},
\]
where
\[
\tilde\xi_n(x_1,\ldots, x_n) = \sum_{\stackrel {r\geq 1}
{i_1,\ldots,i_r}}
\tp{m_{r+n}(w_{i_1},\ldots,w_{i_r}|x_1,\ldots,x_n)}{w^{i_r}\ldots
w^{i_1}}.
\]
(Note that the only the $r>0$ terms of the sum (\ref{sum1})
contribute here.) Finally, interpreting the MC-element
$\tilde\xi\in\tp{\overline{\Hoch}(V,V)}{\hat{S}\Pi V^*}$ as an
$L_\infty$-map ${\mathbf f}=(f_r)$ from $V$ into
\[
\overline{\Hoch}(V,V)= \bigoplus_{n\geq 1}\Hom(T^n\Pi V,\Pi V),
\]
 we obtain the desired formula (\ref{Linf2}).

Now assume that $V$ is a \emph{cyclic} $A_\infty$-algebra. Then the above $L_\infty$-map
$V\ra\overline{\Hoch(V,V)}$ factors
through $\overline{\CHoch}(V,V)$, since a Maurer-Cartan twisting of
a cyclic $A_\infty$-algebra is again cyclic.

\subsection{Application to graph homology}
We now outline one application of the constructions in this section
to graph homology. Let $(V,m)$ be an $L_\infty$-algebra which we
assume to be finite-dimensional for simplicity and consider, as in Section~\ref{MCsection}, the
graded Lie algebra $\g=\Der_0(\hat{S}\Pi V^*)$ consisting of
(continuous) derivations of $\hat{S}\Pi V^*$ vanishing at zero. Note
that $\g$ has the same underlying space as $\overline{\CE}(V,V)$ but
\emph{vanishing differential}; particularly it does not depend on the
$L_\infty$-structure on $V$. Consider the universal twisting of $V$;
the $L_\infty$-structure on $\hat{S}\Pi V^*\otimes V$ corresponding
to the twisting by the canonical element. This $L_\infty$-structure
can  itself be viewed as an element in $\MC(\g,(\hat{S}\Pi V^*, m))$.
Note, however, that this element does not lie in $(\hat{S}\Pi
V^*)_+\otimes \Pi\g$ and thus, we are in the framework of Remark
\ref{nonform}.

It follows that there is a map of from the CE complex of $\g$ to that
of $V$. At this stage we pass to the \emph{dual homological
complexes} $\CE_*(\g)$ and $\CE_*(V)$; thus $\CE_*(V)\cong S\Pi V$
with the differential dual to $[-, m]$ and $\CE_*(\g)$ is the
standard CE homological complex of the graded Lie algebra $\g$.
The switch to the dual language is not necessary, strictly speaking, but
it is more natural for the interpretation of the result in terms of
graph homology.

 Note
that both $\CE_*(V)$ and $\CE_*(\g)$ are dg coalgebras. It follows
that there is a map of dg coalgebras: ${\chi}:\CE_*(V)\ra\CE_*(\g)$.
Since the coalgebra $\CE_*(\g)$ is cogenerated by the component $\Pi
\g$ it follows that such a map is determined by the composition
$\CE_*(V)\ra\CE(\g)\ra\Pi\g$; abusing the notation we will  denote
the latter map also by $\chi$. Furthermore, $\chi$ can be viewed as a
collection $\chi=(\chi_1\ldots,\chi_r\ldots)$ where $\chi_r:(\Pi
V)^{\otimes r}\ra\Pi \g$. Moreover, since $\g$ could be identified
with $\prod_{n=1}^\infty\Hom(S^n V,V)$ the element
$\chi_r(v_1,\ldots, v_r)$ is determined by its value on every
collection $x_1,\ldots x_n\in \Pi V$ for $n=0,1,\ldots$.

Clearly, the same arguments also work in the $A_\infty$ situation,
and also in the cyclic versions of both.   We have the following
result whose proof is essentially the same as that of Theorem
\ref{Lmap}:
\begin{prop}\label{curved}\
\begin{enumerate}
\item
Let $V$ be a finite-dimensional graded vector space:
\begin{enumerate}
\item
Let $\g=\Der_0\hat{S}\Pi^*V$ be the graded Lie algebra of formal
vector fields on $V$ with vanishing constant term and $m\in
\MC(\g,\ground)$ be an MC-element in $\g$ giving $V$ the structure of
an $L_\infty$-algebra. Then there is a map of dg coalgebras
${\chi}:\CE_*(V)\ra\CE_*(\g)$ determined by a collection of maps
$\chi_r:(\Pi V)^{\otimes r}\ra\Pi \g, r=1,2,\ldots$ such that
\[
\chi_r(w_1,\ldots, w_r)(x_1,\ldots x_n)=m_{n+r}(w_1,\ldots, w_r,
x_1,\ldots, x_n).
\]
where $w_i, x_i\in \Pi V$; $r\geq 0$ and $n\geq 1$.
\item
Let $\h=\Der_0\hat{T}\Pi^*V$ be the graded Lie algebra of formal
noncommutative vector fields on $V$ with vanishing constant term and
$m\in \MC(\h,\ground)$ be an MC-element in $\h$ giving $V$ the
structure of an $A_\infty$-algebra. Then there is a map of dg
coalgebras ${\chi}:\CE_*(V)\ra\CE_*(\h)$ determined by a collection
of maps $\chi_r:(\Pi V)^{\otimes r}\ra\Pi \h, r=1,2,\ldots$ such that
\[
\chi_r(w_1,\ldots, w_r)(x_1,\ldots x_n)=
\sum_{\sigma\in S_r}\frac{1}{r!}
m_{n+r}(\sigma(w_1\otimes\ldots\otimes w_r)|
x_1,\ldots, x_n).
\]
where $w_i, x_i\in \Pi V$; $r\geq 0$ and $n\geq 1$.
\end{enumerate}
\item
Let $V$ be a finite-dimensional graded vector space with an inner
product (and thus, $\Pi V$ is a symplectic vector space):
\begin{enumerate}
\item
Let $\tilde\g=\Der_0\hat{S}\Pi^*V$ be the graded Lie algebra of formal
symplectic vector fields on $V$ with vanishing constant term and
$m\in \MC(\tilde\g,\ground)$ be an MC-element in $\tilde\g$ giving $V$ the
structure of a cyclic $L_\infty$-algebra. Then there is a map of dg
coalgebras ${\chi}:\CE_*(V)\ra\CE_*(\tilde\g)$ determined by a collection
of maps $\chi_r:(\Pi V)^{\otimes r}\ra\Pi \tilde\g, r=1,2,\ldots$ such that
\[
\chi_r(w_1,\ldots, w_r)(x_1,\ldots x_n)=m_{n+r}(w_1,\ldots, w_r,
x_1,\ldots, x_n).
\]
where $w_i, x_i\in \Pi V$; $r\geq 0$ and $n\geq 1$.
\item
Let $\tilde\h=\Der_0\hat{T}\Pi^*V$ be the graded Lie algebra of formal
noncommutative symplectic vector fields on $V$ with vanishing
constant term and $m\in \MC(\tilde\h,\ground)$ be an MC-element in $\tilde\h$
giving $M$ the structure of an $A_\infty$-algebra. Then there is a
map of dg coalgebras ${\chi}:\CE_*(V)\ra\CE_*(\tilde\h)$ determined by a
collection of maps $\chi_r:(\Pi V)^{\otimes r}\ra\Pi \tilde\h,
r=1,2,\ldots$ such that
\[
\chi_r(w_1,\ldots, w_r)(x_1,\ldots x_n)=
\sum_{\sigma\in S_r}\frac{1}{r!}
m_{n+r}(\sigma(w_1\otimes\ldots\otimes w_r)|
x_1,\ldots, x_n).
\]
where $w_i, x_i\in \Pi V$; $r\geq 0$ and $n\geq 1$.
\end{enumerate}
\end{enumerate}
\end{prop}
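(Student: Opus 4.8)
The plan is to follow the proof of Theorem~\ref{Lmap} almost verbatim, the two differences being that the target is now the \emph{untwisted} graded Lie algebra $\g=\Der_0(\hat S\Pi V^*)$, with zero differential, so that the classifying Maurer--Cartan element is the full twisted structure $m^\xi$ rather than the difference $m^\xi-m$; and that at the end one dualizes the resulting cochain-level map into a map of coalgebras, exactly as indicated in the paragraph preceding the Proposition. Because $\g$ (and likewise $\h=\Der_0(\hat T\Pi V^*)$, and their symplectic subalgebras $\tilde\g,\tilde\h$) is a graded Lie algebra it carries only the single bracket, hence only finitely many products, so the non-formal Maurer--Cartan formalism of Remark~\ref{nonform} is available; this is what will allow us to treat $m^\xi$ as a Maurer--Cartan element of $\g$.

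First I would produce the classifying element. By Proposition~\ref{MCfunctor} the functor $\MC(V,-)$ is represented by $(\hat S\Pi V^*,m)$, and I take the canonical Maurer--Cartan element $\xi=\sum_i\tp{w_i}{w^i}\in\tp{\Pi V}{\hat S\Pi V^*}$ corresponding to the identity of this cdga, as in the proof of Theorem~\ref{Lmap}. Twisting $m$ by $\xi$ (cf.\ (\ref{twist1})) yields $m^\xi$, and since twisting preserves the square-zero condition, $m^\xi$ is again an $L_\infty$-structure, hence a Maurer--Cartan element of $\g$ with coefficients in $(\hat S\Pi V^*,m)$; its components are the explicit expressions computed in (\ref{sum}). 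The essential point, already flagged before the Proposition, is that $m^\xi$ does \emph{not} lie in $\tp{\Pi\g}{(\hat S\Pi V^*)_+}$, since its $r=0$ component is the original structure $m\otimes 1$. This is precisely why the non-formal framework of Remark~\ref{nonform} is needed, and it causes no trouble because $\g$ has finitely many brackets and because morphisms of formal algebras are not required to preserve augmentations.

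By the Yoneda argument the element $m^\xi$ is the same datum as a morphism of cochain complexes from the CE complex of $\g$ to that of $V$; passing to continuous duals (finite-dimensionality of $V$ makes this clean, giving $\CE_*(V)\cong S\Pi V$ with differential dual to $[-,m]$) produces the desired morphism of dg coalgebras $\chi\colon\CE_*(V)\ra\CE_*(\g)$. As $\CE_*(\g)$ is cofreely cogenerated by $\Pi\g$, the map $\chi$ is determined by its corestriction to the cogenerators, a family $\chi_r\colon S^r\Pi V\ra\Pi\g$. To read these off I would extract from (\ref{sum}) the coefficient of the degree-$r$ monomial $w^{i_r}\cdots w^{i_1}$; the prefactor $\tfrac1{r!}$ cancels against the symmetrization on $S^r\Pi V$ exactly as in Theorem~\ref{Lmap}, leaving
\[
\chi_r(w_1,\ldots,w_r)(x_1,\ldots,x_n)=m_{n+r}(w_1,\ldots,w_r,x_1,\ldots,x_n),
\]
with the $r=0$ contribution reproducing $m$ itself (the non-augmentation-preserving part of $\chi$). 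The statement (1)(b) is identical with $\h$ in place of $\g$, using (\ref{sum1}) and the shuffle notation, and the cyclic statements in part (2) follow by restricting throughout to the Lie subalgebras $\tilde\g,\tilde\h$ of symplectic derivations: by Proposition~\ref{cyclictwist} the twisting $m^\xi$ remains cyclic, so it defines a Maurer--Cartan element in $\tilde\g$ (respectively $\tilde\h$) and the same extraction applies.

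The main obstacle is conceptual rather than computational: making rigorous sense of $m^\xi$ as a Maurer--Cartan element of the \emph{untwisted} algebra $\g$ despite its nonzero constant term, and of the induced classifying map of coalgebras. This is exactly the content of Remark~\ref{nonform}, whose hypotheses (that $\g$ and $\h$ carry only the Lie bracket, hence finitely many products) hold here; once one is working in that framework, the remaining steps are the same bookkeeping as in the proof of Theorem~\ref{Lmap}, and the cyclic refinements add nothing beyond Proposition~\ref{cyclictwist}.
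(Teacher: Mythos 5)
Your proposal is correct and follows essentially the same route as the paper: the authors likewise take the universal twisting $m^\xi$ of the canonical element (whose $r=0$ term is $m\otimes 1$, so it fails to lie in $\tp{\Pi\g}{(\hat{S}\Pi V^*)_+}$) as a Maurer--Cartan element in $\MC(\g,(\hat{S}\Pi V^*,m))$ in the non-formal framework of Remark \ref{nonform}, obtain from it a map of Chevalley--Eilenberg cochain complexes, dualize to the homological coalgebras, and read off the components $\chi_r$ from the cogenerating component $\Pi\g$ exactly as in the proof of Theorem \ref{Lmap}, with the cyclic cases handled by Proposition \ref{cyclictwist}. The one point the paper makes explicit that you only touch implicitly is that admitting the $r=0$ component means $\chi$ is only a \emph{curved} $L_\infty$-map rather than an $L_\infty$-map, which is nonetheless enough to induce the stated map of dg coalgebras.
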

\noproof
\begin{rem}
The statement of Proposition \ref{curved} appears almost
tautologically identical to that of Theorem \ref{Lmap}; however the
essential difference between these results is that we allow the value
$r=0$ in the formulas for $\chi$ (but not in the formulas for
${\mathbf f}$ (\ref{Linf1}), (\ref{Linf2}) ). This has the effect
that $\chi$ \emph{does not} give an $L_\infty$-map from $V$ into
$\g$. In fact, $\chi$ could be thought of as a \emph{curved}
$L_\infty$-map, cf. the last section concerning curved
$L_\infty$-algebras. However, having such a map is sufficient to get
an induced map on the CE homology.
\end{rem}
The above result is particularly interesting when $V$ is a cyclic
$L_\infty$- or $A_\infty$-algebra. In that case letting the dimension
of $V$ go to infinity we obtain the stable Lie algebra $\g_{\infty}$
whose homology $\CE(\g_{\infty})$ is essentially Kontsevich's graph
complex in the $L_\infty$-case and the complex of ribbon graphs in
the $A_\infty$-case, \cite{Kon, Ham, HaL}. Therefore, we obtain a map
from $\CE_*(V)$ to the appropriate version of the graph complex. This
map is related to, but quite different from another construction, the
so-called direct construction of Kontsevich which associates a graph
homology class to $V$ itself. The precise connection between these constructions
is the subject of a forthcoming paper.

\section{Homotopy fiber sequences of $L_\infty$-algebras}
In the previous section we constructed an $L_\infty$-map ${\mathbf
f}:V\ra\overline{\CE}(V,V)$ for an $L_\infty$-algebra $V$ which fits
into a homotopy fiber sequence of dg vector spaces
\[\xymatrix{
V\ar^-{\bf f}[r]&\overline{\CE}(V,V)\ar[r]&\CE(V,V),
}\]
and similarly in the $A_\infty$-algebra case. It is natural to ask
whether these are actually homotopy fiber sequences \emph{in the
category of $L_\infty$-algebras}. In this section we show that this is indeed the
case and explain the relation to work of Voronov and of Fiorenza-Manetti.

We begin by recalling the relevant notions. Note first that for any $L_\infty$-algebra
$(V,m)$ there is defined another such
$V[z,dz]:=V\otimes\ground[z,dz]$ where $\ground[z,dz]$ is the
polynomial de Rham algebra on the interval; the evaluation maps
$|_{z=a}:\ground[z,dz]\ra\ground, a=0,1$,  induce the
corresponding evaluation maps $|_{z=a}:V[z,dz]\ra V$.

\begin{defi}
Let $V, W$ be two $L_\infty$-algebras and $f, g:V\ra W$ be two maps
between them. A \emph{homotopy} between $f$ and $g$ is an
$L_\infty$-algebra map $s:V\ra W[z,dz]$ such that $s|_{z=0}=f$ and
$s|_{z=1}=g$.
\end{defi}
Note that a map $V\ra W$ is by definition a (formal) cdga map
$\hat{S}\Pi W^*\ra\hat{S}\Pi V^*$; the latter is equivalent to an
element in  $\MC(W, \hat{S}\Pi V^*)$. We shall be particularly
interested in the situation when $W$ is a dgla in which case
$\tp{W}{(\hat{S}\Pi V^*)_+}$ is a (formal) dgla. Then there is the
following characterization of homotopy:
\begin{prop}\label{equi}
Let $V$ be an $L_\infty$-algebra, $W$ be a dgla and $f,g\in \MC(W,
\hat{S}\Pi V^*)$ be two corresponding $L_\infty$-maps. Then $f$ and
$g$ are homotopic if and only if there exists an element $\xi\in
\tp{\Pi W}{(\hat{S}\Pi V^*)_+}$ such that $f=e^\xi ge^{-\xi}$.
\end{prop}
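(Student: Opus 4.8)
The plan is to recognize this as the classical identification of homotopy of maps into a dgla with gauge equivalence of the corresponding Maurer--Cartan elements. Write $\mathfrak{L}:=W\otimes(\hat{S}\Pi V^*)_+$; since $(\hat{S}\Pi V^*)_+$ is the maximal ideal of a formal cdga it is pro-nilpotent, so $\mathfrak{L}$ is a pro-nilpotent dgla whose differential $\delta$ combines $d_W$ with the differential $m$ of $\hat{S}\Pi V^*$. By the discussion preceding the proposition, $f$ and $g$ are exactly Maurer--Cartan elements of $\mathfrak{L}$. The first step is to observe that a homotopy $s\colon V\to W[z,dz]$ is an $L_\infty$-map into the dgla $W[z,dz]$, hence --- by the very same identification --- a Maurer--Cartan element $S$ of $W[z,dz]\otimes(\hat{S}\Pi V^*)_+=\mathfrak{L}[z,dz]$, and the conditions $s|_{z=0}=f$, $s|_{z=1}=g$ translate into $S|_{z=0}=f$ and $S|_{z=1}=g$. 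Thus the proposition reduces to the assertion that two Maurer--Cartan elements of $\mathfrak{L}$ are joined by a Maurer--Cartan element of $\mathfrak{L}[z,dz]$ if and only if they differ by a conjugation $e^\xi(-)e^{-\xi}$. As in Example~\ref{dgla} and the twisting theorem above, this conjugation is to be read in the derivation picture in which the differential is absorbed into the twisted structure: $e^\xi g e^{-\xi}$ denotes the gauge action $g\mapsto e^{\ad_\xi}g$, whose $\delta$-correction term appears automatically because the object genuinely conjugated is the twisted differential $\delta+[g,-]$ (equivalently the twisted structure derivation $m^g$). Using Proposition~\ref{shift}, in the non-formal form of Remark~\ref{nonform2}, one may first twist by $g$ to reduce to the case $g=0$, whereupon the claim becomes that $f-g\in\MC(\mathfrak{L}^g)$ is null-homotopic precisely when it is pure gauge.

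For the implication ($\Leftarrow$) I would exhibit the homotopy explicitly. Given $\xi$ with $f=e^\xi g e^{-\xi}$, let $a(z)$ be the gauge transform of $f$ by $e^{-z\xi}$, namely the solution of the flow $\frac{d}{dz}a(z)=[\,a(z),\xi\,]-\delta\xi$ (signs as dictated by the conventions of Section~2) with $a(0)=f$, and put
\[
S:=a(z)-\xi\,dz\ \in\ \mathfrak{L}[z,dz].
\]
Organizing the Maurer--Cartan equation $\delta S+\tfrac12[S,S]=0$ by powers of $dz$, its $dz$-free part is $\delta a+\tfrac12[a,a]=0$, which holds because each $a(z)$, being a gauge transform of the Maurer--Cartan element $f$, is again Maurer--Cartan, while its $dz$-linear part is exactly the flow equation defining $a(z)$. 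Since $a(0)=f$ and $a(1)$ equals the gauge transform of $f$ by $e^{-\xi}$, which is $g$ (as $f=e^\xi g e^{-\xi}$), the element $S$ is the desired homotopy.

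For the converse ($\Rightarrow$), given a homotopy $S\in\MC(\mathfrak{L}[z,dz])$ with the prescribed endpoints I would decompose $S=a(z)+b(z)\,dz$ and split the Maurer--Cartan equation as before: the $dz$-free part says that $a(z)$ is Maurer--Cartan for each $z$, while the $dz$-linear part is the time-dependent flow $\frac{d}{dz}a(z)=[\,a(z),b(z)\,]-\delta b(z)$ (again up to the signs of Section~2). Integrating this flow from $z=0$ to $z=1$ --- that is, forming the time-ordered exponential of $b(z)$ --- produces a gauge transformation carrying $a(0)=f$ to $a(1)=g$, and taking its logarithm yields an element $\xi$ with $f=e^\xi g e^{-\xi}$. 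I expect this integration step to be the principal obstacle: one must show that the time-ordered exponential and its logarithm converge and define a genuine element of $\mathfrak{L}=W\otimes(\hat{S}\Pi V^*)_+$. This is precisely where pro-nilpotence is used --- modulo each power of the maximal ideal of $\hat{S}\Pi V^*$ the flow is nilpotent, the relevant series terminate, and one passes to the inverse limit. The remainder is sign-sensitive bookkeeping of the $\Z/2$-grading and the Koszul rule in the derivation formalism, together with the verification that the constructions in the two directions are mutually inverse.
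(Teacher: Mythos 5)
Your proposal is correct and takes essentially the same route as the paper: the paper's entire proof of Proposition~\ref{equi} is a one-line appeal to the well-known correspondence between homotopy and gauge equivalence of Maurer--Cartan elements in formal (pro-nilpotent) dglas (citing \cite{CL}, Theorem 4.4), which is precisely the classical fact you reduce to after identifying $L_\infty$-maps and homotopies with MC elements of $W\otimes(\hat{S}\Pi V^*)_+$ and of its $[z,dz]$-extension. Your flow/time-ordered-exponential argument, with pro-nilpotence of $(\hat{S}\Pi V^*)_+$ invoked to make the holonomy and its logarithm converge, is the standard proof of that cited black box, so you have supplied the details the paper delegates to the reference.
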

\begin{proof}
This follows from a well-known characterization of homotopy in formal
(or pronilpotent) dglas, cf. for example \cite{CL}, Theorem 4.4.
\end{proof}

Furthermore we could form  the homotopy category of dglas by formally
inverting quasi-isomorphisms. There is also the notion  of a homotopy
fiber sequence.
\begin{defi}
For a dgla map $f:\mathfrak{g\ra h}$ the \emph{homotopy fiber} of $f$
is defined as $H_f:=\{(g,h)\in \mathfrak g\oplus\mathfrak
h[z,dz]:h|_{z=0}=0, h|_{z=1}=f(g)\}$; the projection onto $\mathfrak
g$ is a dgla map $H_f\ra \mathfrak g$. The sequence
$H_f\ra\mathfrak{g\ra h}$ as well as any $3$-term sequence of dg Lie
algebras homotopy equivalent to it will be called a \emph{homotopy
fiber sequence} of dglas.
\end{defi}
For a homotopy fiber sequence
$\xymatrix{H_f\ar[r]&\mathfrak{g}\ar^f[r]&\mathfrak h}$ the composite
map $H_f\ra \mathfrak h$ is homotopic to zero; moreover it possesses
a semi-universal property: any map $\phi:\mathfrak{a\ra h}$ of dglas
which is homotopic to zero after composing with $f$ factors through
$H_f$. Namely, if $s:\mathfrak{a}\ra\mathfrak{h}[z,dz]$ is a
nullhomotopy of $f\circ\phi$, then the factorization is given by
$(\phi,s):\mathfrak{a}\to H_f\subset \mathfrak g\oplus\mathfrak
h[z,dz]$. We will need a slight extension of this property: suppose
that $\mathfrak{a}$ is an $L_\infty$-algebra and $\phi$ is an
$L_\infty$-map from $\mathfrak{a}$ into $\mathfrak{g}$ whose
composition with $f$ is nullhomotopic. Then the same formula as above
gives an $L_\infty$-map from $\mathfrak{a}$ into $H_f$.

The dgla homotopy fiber $H_f$ should be compared with the dg homotopy
fiber $C_f=\mathfrak{g}\oplus\Pi\mathfrak{h}$ with differential $d$
defined as follows. For $(g,h)\in \mathfrak{g}\oplus\Pi\mathfrak{h}$,
set $d(g,h)=(\partial g, -\partial h + f(g))$.

The following result is standard (see, e.g., \cite[Lemma 3.2]{FM}).
\begin{prop}\label{standard}
The dg vector space $C_f$ is a retract of $H_f$, via the maps
$i:C_f\ra H_f$ and $\pi:H_f\ra C_f$, given by
$$i(g,h)=(g, f(g)z + h dz);$$
$$\pi(g,h(z,dz))=(g,\int_0^1 h(z,dz)).$$
\end{prop}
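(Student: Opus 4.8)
The plan is to treat $C_f$ and $H_f$ purely as dg vector spaces and to verify three things: that $i$ and $\pi$ are well-defined maps of degree zero, that both are chain maps, and that $\pi\circ i=\id_{C_f}$ (which is all that ``retract'' requires). Well-definedness of $i$ is where the defining constraints of $H_f$ enter: applying the evaluation maps, which send $z\mapsto a$ and $dz\mapsto 0$, to $f(g)z+h\,dz$ gives $(f(g)z+h\,dz)|_{z=0}=0$ and $(f(g)z+h\,dz)|_{z=1}=f(g)$, exactly the conditions cutting out $H_f\subset\mathfrak{g}\oplus\mathfrak{h}[z,dz]$. For $\pi$, the integral $\int_0^1$ extracts the $dz$-component and returns an element of $\mathfrak{h}$, which is placed in $\Pi\mathfrak{h}$; a short parity count using $|dz|=1$ and the shift in $\Pi\mathfrak{h}$ shows both maps are even.

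The retract identity is then immediate and can be disposed of first: since $\int_0^1 dz=1$ while any form with no $dz$-component integrates to $0$, one gets $\int_0^1(f(g)z+h\,dz)=h$, hence $\pi(i(g,h))=(g,h)$.

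The substance lies in the chain-map property for $\pi$. Writing a general element of $H_f$ as $(g,\,p(z)+q(z)\,dz)$ with the boundary conditions $p(0)=0$ and $p(1)=f(g)$, I would compute the total differential $D$ on $\mathfrak{h}[z,dz]$ (the $\mathfrak{h}$-differential together with the de Rham differential $p(z)\mapsto p'(z)\,dz$) and apply $\pi$. The one genuinely structural step is that $\int_0^1 p'(z)\,dz=p(1)-p(0)=f(g)$ by the fundamental theorem of calculus: the defining boundary conditions of $H_f$ are precisely what converts the de Rham term into the summand $f(g)$ of the cone differential $d_{C_f}(g,h)=(\partial g,-\partial h+f(g))$. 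The remaining term matches because $\int_0^1$ is $\mathfrak{h}$-linear and hence commutes with $\partial$. The dual computation $D\circ i=i\circ d_{C_f}$ is a direct check using only that $f$ is a chain map.

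The main obstacle is sign bookkeeping, not conceptual difficulty. One must fix mutually compatible conventions for the Koszul signs in $\mathfrak{h}\otimes\ground[z,dz]$, the de Rham differential acting on $\mathfrak{h}$-valued forms, and the shifted differential on $\Pi\mathfrak{h}$. With the convention $D(a\otimes\omega)=(-1)^{|\omega|}\partial a\otimes\omega+a\otimes d\omega$, the differential of $f(g)\otimes z$ contributes $f(g)\otimes dz$ with no extra sign while the differential of $h\otimes dz$ contributes $-\partial h\otimes dz$, so the $dz$-coefficient of $D(i(g,h))$ is exactly $f(g)-\partial h$, matching the $dz$-coefficient of $i(d_{C_f}(g,h))$; a different convention merely relocates the signs without affecting the conclusion. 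Once the conventions are pinned down this is entirely routine, and it is precisely the content of \cite[Lemma 3.2]{FM}, to which I would ultimately appeal.
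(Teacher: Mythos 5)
Your verification is correct, and it matches what the paper intends: the paper offers no proof of this proposition at all, marking it as standard and citing \cite[Lemma 3.2]{FM}, which is exactly the routine check you carry out (well-definedness of $i$ via the boundary conditions cutting out $H_f$, the identity $\pi\circ i=\id$, and the chain-map property for $\pi$ hinging on $\int_0^1 p'(z)\,dz=p(1)-p(0)=f(g)$). Your sign bookkeeping under the stated Koszul convention is consistent, so there is nothing to correct.
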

\noproof

\begin{rem}
Note that dg fiber $C_f$ does not itself support a dgla structure.
However Fiorenza and Manetti \cite{FM} use the retraction in
Proposition~\ref{standard} to transport the dgla structure of $H_f$
to an $L_\infty$-algebra structure on $C_f$.
\end{rem}

Note that the category of dglas is a subcategory of the category of
$L_\infty$-algebras, and this inclusion induces an equivalence of
homotopy categories obtained by inverting respectively strict or
$L_\infty$-quasi-isomorphisms. We can now define a homotopy fiber
sequence of $L_\infty$-algebras.
\begin{defi}
The sequence of $L_\infty$-algebras and maps
\[
\mathfrak{a\ra g\ra h}
\]
is called \emph{homotopy fiber} if it is $L_\infty$-quasi-isomorphic
to a homotopy fiber sequence of dglas.
\end{defi}
It is clear that the `homotopy fiber' of an $L_\infty$-map
$f:\mathfrak g\ra \mathfrak h$ (i.e. an $L_\infty$-algebra $\mathfrak
a$ which can be included in a fiber sequence $\xymatrix{\mathfrak
a\ar[r]&\mathfrak{g}\ar^f[r]&\mathfrak h}$) is determined uniquely up
to a non-canonical isomorphism in the homotopy category and that it
possesses the analogous semi-universal property as the homotopy fiber
of a dgla map.

\begin{theorem}\label{homf}\
\begin{enumerate}
\item
\begin{enumerate}
\item Let $V$ be an $L_\infty$-algebra; then the sequence
$$\xymatrix{V\ar^-{\mathbf f}[r]&\overline{\CE}(V,V)\ar^-g[r]&\CE(V,V))}$$ is a
fiber sequence of $L_\infty$-algebras.
\item Let $V$ be a cyclic $L_\infty$-algebra; then the sequence
$$\xymatrix{V\ar^-{\mathbf f}[r]&\overline{\CE}(V,V)\ar^-g[r]&\CE(V,V))}$$ is a
fiber sequence of $L_\infty$-algebras.
\end{enumerate}
\item
\begin{enumerate}
\item
Let $V$ be an $A_\infty$-algebra; then the sequence
$$\xymatrix{V\ar^-{\mathbf f}[r]&\overline{\Hoch(V,V)}\ar^-g[r]&\Hoch(V,V)}$$ is a
fiber sequence of $L_\infty$-algebras.
\item Let $V$ be a cyclic $A_\infty$-algebra; then the sequence
$$\xymatrix{V\ar^-{\mathbf f}[r]&\overline{\CE}(V,V)\ar^-g[r]&\CE(V,V)}$$ is a
fiber sequence of $L_\infty$-algebras.
 \end{enumerate}
\end{enumerate}
\end{theorem}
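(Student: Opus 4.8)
The plan is to show that each sequence is $L_\infty$-quasi-isomorphic to the dgla homotopy fiber sequence of the relevant inclusion $g$, which by definition is exactly what it means to be a homotopy fiber sequence of $L_\infty$-algebras. I carry out case (1)(a) in detail; the others follow by the evident substitutions (replacing $\hat{S}\Pi V^*$ by $\hat{T}\Pi V^*$ and $\CE$ by $\Hoch$ in the $A_\infty$-cases, and restricting to the subcomplexes of symplectic derivations in the cyclic cases, where the targets are $\overline{\CE}(V)\hookrightarrow\CE(V)$, respectively $\overline{\CHoch}(V,V)\hookrightarrow\CHoch(V,V)$). First I form the dgla homotopy fiber $H_g$ of the inclusion $g\colon\overline{\CE}(V,V)\to\CE(V,V)$, which is a dgla map since $\overline{\CE}(V,V)$ is a sub-dgla; this gives the dgla fiber sequence $H_g\to\overline{\CE}(V,V)\to\CE(V,V)$.

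The heart of the argument is to prove that the composite $L_\infty$-map $g\circ\mathbf f\colon V\to\CE(V,V)$ is nullhomotopic. Here I would apply Proposition~\ref{equi} with $W=\CE(V,V)$: two $L_\infty$-maps $V\to\CE(V,V)$, regarded as Maurer--Cartan elements over $\hat{S}\Pi V^*$, are homotopic precisely when they are conjugate by some $e^{\zeta}$ with $\zeta\in\tp{\Pi\CE(V,V)}{(\hat{S}\Pi V^*)_+}$. Writing $\CE(V,V)$ as the twist by $m$ of the graded Lie algebra $\Der(\hat{S}\Pi V^*)$ of \emph{all} continuous derivations (so that $\overline{\CE}(V,V)$ is the corresponding twist of the sub Lie algebra $\g=\Der_0(\hat{S}\Pi V^*)$), and passing through the shift isomorphism of Proposition~\ref{shift}, the map $g\circ\mathbf f$ corresponds to the universal twisted structure $m^{\xi}$ and the zero map to $m$, where $\xi=\sum_i\tp{w_i}{w^i}$ is the canonical Maurer--Cartan element. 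Since $m^{\xi}=e^{\hat\xi}\,m\,e^{-\hat\xi}$, with $\hat\xi$ the constant (translation) derivation attached to $\xi$, these two structures are conjugate via $\hat\xi$, so Proposition~\ref{equi} yields an explicit nullhomotopy $s$. The decisive point, and the reason the truncation appears, is that $\hat\xi$ is the $n=0$ (constant-term) component and hence lies in $\CE(V,V)$ but \emph{not} in $\overline{\CE}(V,V)$: thus $g\circ\mathbf f$ becomes nullhomotopic exactly upon enlarging $\overline{\CE}(V,V)$ to $\CE(V,V)$.

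With the nullhomotopy $s$ in hand, the $L_\infty$-version of the semi-universal property of the homotopy fiber (recorded just before the theorem) produces an $L_\infty$-map $\Phi=(\mathbf f,s)\colon V\to H_g$ lying over $\mathbf f$, i.e.\ with $\mathrm{pr}\circ\Phi=\mathbf f$. Together with the identity maps this gives a morphism of three-term sequences from $V\to\overline{\CE}(V,V)\to\CE(V,V)$ to $H_g\to\overline{\CE}(V,V)\to\CE(V,V)$. It then remains to check that $\Phi$ is an $L_\infty$-quasi-isomorphism, i.e.\ that its linear part is a quasi-isomorphism. Both rows are homotopy fiber sequences of complexes: the bottom one because it is a dgla fiber sequence, and the top one by~(\ref{fibL1}), which in turn follows from the short exact sequence of complexes $0\to\overline{\CE}(V,V)\to\CE(V,V)\to V\to 0$ whose cokernel is the constant-term component $\Hom(S^0\Pi V,\Pi V)\cong\Pi V$ with induced differential $m_1$, the underlying complex of $V$. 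Comparing the two associated long exact cohomology sequences along $\Phi$ and the two identities, the five lemma forces $\Phi$ to induce an isomorphism on cohomology. Hence $V\to\overline{\CE}(V,V)\to\CE(V,V)$ is $L_\infty$-quasi-isomorphic to the dgla fiber sequence $H_g\to\overline{\CE}(V,V)\to\CE(V,V)$, and is therefore a homotopy fiber sequence of $L_\infty$-algebras.

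The $A_\infty$-case is formally identical, using the associated $L_\infty$-structure $\overline m$ (so that $\MC(V,-)$ is still represented by $\hat{S}\Pi V^*$), the twisting formula~(\ref{twist2}), and the constant derivation of $\hat{T}\Pi V^*$. In the cyclic cases one works throughout inside the Lie subalgebras of symplectic derivations; here the extra input is that the conjugating constant derivation $\hat\xi$ is itself symplectic (a translation is a Hamiltonian vector field), so the nullhomotopy $s$ stays in the cyclic subcomplex, while Proposition~\ref{cyclictwist} ensures that the universal twist remains cyclic. I expect the main obstacle to be the second step: recognizing that the nullhomotopy of $g\circ\mathbf f$ is implemented by conjugation with the constant-term derivation $\hat\xi$, an element that exists in the untruncated complex but not in the truncated one. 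Once this is isolated, the passage to $H_g$ via the semi-universal property and the final quasi-isomorphism check via the five lemma are routine.
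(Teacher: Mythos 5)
Your proposal follows the paper's own proof essentially step for step: the same nullhomotopy of $g\circ\mathbf f$ obtained by conjugating the Chevalley--Eilenberg differential by the constant derivation $\hat\xi$ attached to the canonical MC element (justified by Proposition~\ref{equi}), the same passage through the semi-universal property of the dgla homotopy fiber to produce $(\mathbf f,s)\colon V\to H_g$, and the same reductions for the $A_\infty$ and cyclic cases. The only divergence is the final check that the linear component $\tilde s_1\colon \Pi V\to \Pi H_g$ is a quasi-isomorphism: the paper composes with the explicit retraction $\pi\colon H_g\to C_g$ of Proposition~\ref{standard} and applies a mapping-cone lemma after observing that $\CE(V,V)=\operatorname{cone}(f_1)$, whereas you run the five lemma on the two long exact sequences --- an equivalent verification, since the commutativity of the connecting-map square your ladder requires is exactly the computation $(\pi\circ\tilde s_1)(w)=(f_1(w),\hat w)$ that the paper makes explicit.
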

\begin{proof}
We restrict ourselves with proving the statement about
$L_\infty$-algebras; the other parts are proved similarly. Denote by
$\alpha=g\circ {\mathbf f}: V\ra\CE(V,V)$ the composite
$L_\infty$-map.
 Let
us first prove that $\alpha$ is homotopic to zero. To this end we
view $\alpha$ as an MC element in the dgla $\hat{S}\Pi
V^*\otimes\Pi\CE(V,V)$. The zero map is represented by the CE
differential $1\otimes[,m]$ in this dgla.  Observe that $\alpha=e^\xi
d e^{-\xi}$ where $\xi=\sum w^i\otimes w_i$ where $w_i$ and $w^i$ are
dual bases in $\Pi V$ and $\Pi V^*$ respectively, and $w_i$ is viewed
as a (constant) derivation of $\hat{S}\Pi V^*$. It follows from
Proposition \ref{equi} that $\alpha$ is indeed homotopic to zero. The
homotopy has the following explicit form: $\alpha_z=e^{z\xi} d
e^{-z\xi}+\xi dz$.

The arguments similar to those used to derive formulas (\ref{Linf1})
translate this homotopy to an $L_\infty$-map $s=\{s_r\}$ from $V$ to
$\CE^*(V,V)[z, dz]$:
\[
s_r(w_1,\ldots, w_r)=\begin{cases}{z^rf_r(w_1, \ldots, w_r), \text {
if $r>1$}}\\{zf_1(w)+\hat{w}dz, \text{ if $r=1$}}\end{cases}
\]
Here $\hat{w}$ refers to the constant derivation of $\hat{S}\Pi
V^*$ associated to $w$ viewed as an element in $\Pi\CE(V,V)$.

The $L_\infty$-map $s$ gives rise to an $L_\infty$-map
$\tilde{s}=\{\tilde{s}_r\}$ from $V$ to $H_g\subset
\overline{\CE}^*(V,V)\oplus\Pi \CE^*(V,V)$. Here $\tilde{s}_r:(\Pi
V)^{\otimes n}\ra \Pi H_g$:
\[
\tilde{s}_r(w_1,\ldots, w_r)=(f_r(w_1,\ldots, w_r),s_r(w_1,\ldots,
w_r)).
\]

It remains to show that $\tilde{s}_1:\Pi V \ra \Pi H_g$ is a
quasi-isomorphism. For this, it suffices to prove that the composite
map $\pi\circ\tilde{s}_1: \Pi V \to \Pi C_g$ is a quasi-isomorphism.
We have
$$(\pi\circ\tilde{s}_1)(w) = (f_1(w),\int_0^1 s_1(w)) =
(f_1(w),\hat{w}).$$

Observe that $\CE(V,V)$ is the dg mapping cone of
$f_1:V\to\overline{\CE}(V,V)$, and $g$ is the canonical map of
$\overline{\CE}(V,V)$ into that cone. So we conclude the argument by
appealing to the following general fact.
\begin{lem}
Let
\[\xymatrix{X\ar^-f[r]&Y\ar^-g[r]&\operatorname{cone}(f)}\]
be a standard cofiber sequence of dg vector spaces and let $C_g$ be
the homotopy fiber of $g$; recall that $C_g$ has $Y\oplus\Pi Y \oplus
X$ as its underlying graded vector space. Then the map $h:X\to C_g$
defined by $h(x)=(f(x),0,x)$ is a quasi-isomorphism.
\end{lem}
\end{proof}

We now describe the relationship of Theorem \ref{homf} to Voronov's work
on higher derived brackets \cite{Vor,Vor'}. We are indebted to the referee
for bringing the precise connection to our attention. Voronov's starting point is
an inclusion $K\hookrightarrow L$ of dgla's with abelian complement $W$.
He shows that $L\oplus \Pi W$ may be endowed with an $L_\infty$-algebra structure
so that the projection map $L\oplus \Pi W$ is a replacement of the original
inclusion $K\hookrightarrow L$ by a fibration. This implies that
$\Pi W$, equipped with Voronov's higher derived brackets, is a homotopy
fiber of $K\hookrightarrow L$.

The inclusion $\overline{\CE}(V,V)\hookrightarrow {\CE}(V,V)$ (and its variations) we have considered in this section is a particular case of Voronov's setup, with complementary space $W=\Pi V$. His theory, applied in this case, shows that
the $L_\infty$-algebra $(V,m)$ is a homotopy fiber of the inclusion. The new statement in Theorem \ref{homf} is the assertion that one obtains a
homotopy fiber sequence $V\to\overline{\CE}(V,V)\to {\CE}(V,V)$, in which
the first map is the map $\bf$ explicitly described in formulas (\ref{Linf1})
and (\ref{Linf2}).

Our explicit realisation of a homotopy fiber sequence may be compared with
the main result of Fiorenza and Manetti \cite{FM}.
Given an arbitrary dgla map $K\to L$, they construct a homotopy fiber sequence
$C\ra K \ra L$ of $L_\infty$-algebras, where the underlying dg vector space
of the homotopy fiber $L_\infty$-algebra $C$ is the homotopy fiber
of $K\to L$ \emph{in the category of dg vector spaces}.

\subsection{Curved $L_\infty$-algebras and deformations} We will now
outline one application of the above theorem which, roughly, could be
formulated as saying that an $L_\infty$-algebra $V$ controls the
deformations of $V$ as an $L_\infty$-algebra \emph{which are trivial}
as deformations of curved $L_\infty$-algebras. A similar statement
can, of course, be made in the $A_\infty$ or cyclic contexts; we will restrict ourselves to treating the $L_\infty$ case
only.
\begin{defi}
A curved $L_\infty$-algebra structure on a graded vector space $V$ is
a (continuous) odd derivation $m$ of the algebra $\hat{S}\Pi V^*$
such that $[m,m]=0$.
\end{defi}
Note that such a derivation can be written as $m=m_0+m_1+\ldots$
where $m_i$ is an odd map $\hat{S}(\Pi V^*)\ra \Pi V^*$ which is
equivalent to a collection of $S_n$-invariant maps $(\Pi V)^{\otimes
n}\ra \Pi V$ subject to some constraints coming from the identity
$[m,m]=0$. The difference with the notion of an $L_\infty$-algebra is
that $m$ is not required to have zero constant term; consequently
$m_0$ could be non-zero and $m_1$ does not necessarily square to
zero.

For a curved $L_\infty$-algebra $(V,m)$ we could form its
Chevalley-Eilenberg complex $\CE(V,V)$; its underlying space is
$\Der\hat{S}(V)$ and the differential is defined as the commutator
with $m$, just as in the uncurved case. The condition $[m,m]=0$
ensures that the differential squares to zero, however there is no
analogue of $\overline{\CE}(V,V)$ since $[,m]$ does not restrict to
the subspace of derivations with the vanishing constant term.
Clearly, $\CE(V,V)$ is a dgla. This dgla controls
deformations of $V$; let us briefly recall the general setup of
(derived) deformation theory, cf. for example \cite{Me}.

Let $\mathfrak g$ be a dgla (or, more generally, an
$L_\infty$-algebra). Let $\Def_{\mathfrak g}$ be the functor
associating to a formal cdga $A$ the pointed set
$\MCmod(\g,A)$
 of MC-elements in
$\tp{{\mathfrak g}}{A_+}$ \emph{modulo gauge equivalence}. Here we
say that two elements $x_0, x_1\in\MC({\mathfrak g},A)$ are gauge
equivalent if they are homotopic, i.e. there exists an MC element
$X\in\MC({\mathfrak g}[z,dz], A)$ such that $X|_{z=0}=x_0$ and
$X|_{z=1}=x_1$. The base point in $\Def_{\mathfrak g}$ is the gauge
class of the zero MC element. The functor $\Def_{\mathfrak g}$ is
homotopy invariant in ${\mathfrak g}$, which means that a
quasi-isomorphism ${\mathfrak g}\ra {\mathfrak g}^\prime$ induces a
natural bijection $\Def_{\mathfrak g}(A)\cong \Def_{{\mathfrak
g}^\prime}(A)$.
\begin{defi}
A deformation of an $L_\infty$-algebra $V$ over a formal cdga $A$ is
an MC-element in $\tp{\overline{\CE}(V,V)}{A_+}$. Two such
deformations are called \emph{equivalent} if the corresponding
MC-elements are gauge equivalent. Similarly a deformation of an
curved $L_\infty$-algebra $V$ over a formal cdga $A$ is an MC-element
in $\tp{{\CE}(V,V)}{A_+}$. Two such deformations are called
\emph{equivalent} if the corresponding MC-elements are gauge
equivalent.
\end{defi}
Now let ${\mathfrak g}\ra {\mathfrak h}\ra {\mathfrak a}$ be a
homotopy fiber sequence of $L_\infty$-algebras. Associated to it
is the sequence of functors $\Def_{\mathfrak g}\ra\Def_{\mathfrak
h}\ra\Def_{\mathfrak a}$ which is fiber in the sense that the
preimage in $\Def_{\mathfrak h}$ of the base point in $\Def_{\mathfrak a}$
 lies in the image of $\Def_{\mathfrak g}$. This
could be easily seen by using the homotopy invariance of $\Def$ and
reducing to a short exact sequence of dglas. Note that the map
$\Def_{\mathfrak g}\ra\Def_{\mathfrak h}$ is not necessarily
injective; in fact it has an action of the group of homotopy
self-equivalences of the functor $\Def_{\mathfrak a}$, however
explaining this would take us too far afield and we refrain from
giving further details.

We obtain the following result, which is an immediate consequence of
theorem \ref{homf}.
\begin{cor}
A deformation of an $L_\infty$-algebra $V$ over a formal cdga $A$
which is trivial as its deformation as a curved $L_\infty$-algebra is
always given by a twisting with some MC-element $\xi\in\MC(V,A)$.
\end{cor}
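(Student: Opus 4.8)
The plan is to deduce the corollary formally from Theorem \ref{homf} by passing to deformation functors. First I would take the homotopy fiber sequence of $L_\infty$-algebras
\[
\xymatrix{V\ar^-{\mathbf f}[r]&\overline{\CE}(V,V)\ar^-g[r]&\CE(V,V)}
\]
furnished by Theorem \ref{homf}, and apply the (homotopy-invariant) functor $\Def_{(-)}$ to obtain the sequence of deformation functors
\[
\xymatrix{\Def_V\ar[r]&\Def_{\overline{\CE}(V,V)}\ar[r]&\Def_{\CE(V,V)}.}
\]
As observed immediately before the corollary, this sequence is fiber: the preimage in $\Def_{\overline{\CE}(V,V)}$ of the base point of $\Def_{\CE(V,V)}$ lies in the image of $\Def_V$.

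Next I would match the hypotheses of the corollary against this picture. By definition, a deformation of $V$ as an $L_\infty$-algebra over $A$ is an MC-element $\mu\in\tp{\overline{\CE}(V,V)}{A_+}$, i.e. a point of $\Def_{\overline{\CE}(V,V)}(A)$; the assertion that it is trivial as a deformation of the curved $L_\infty$-algebra $V$ means precisely that its image $g(\mu)\in\tp{\CE(V,V)}{A_+}$ is gauge equivalent to zero, i.e. that the class of $\mu$ maps to the base point of $\Def_{\CE(V,V)}(A)$. The fiber property then forces the class of $\mu$ to lie in the image of $\Def_V(A)$; thus there exists $\xi\in\MC(V,A)$ whose image under $\mathbf f$ represents the class of $\mu$ in $\Def_{\overline{\CE}(V,V)}(A)$.

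It remains to identify this image. Here I would invoke the characterizing universal property of $\mathbf f$ recorded in Theorem \ref{Lmap}: the element $\xi\in\MC(V,A)$ is sent by $\mathbf f$ to the MC-element $m^\xi-m\in\MC(\overline{\CE}(V,V),A)$ corresponding to the twisting of $\tp{V}{A}$ by $\xi$. Consequently $\mu$ is gauge equivalent to the twisting $m^\xi-m$, which is exactly the assertion of the corollary. I do not expect a genuine obstacle here; the only care needed is bookkeeping, namely checking that the two notions of a trivial deformation correspond respectively to the base points of $\Def_{\overline{\CE}(V,V)}$ and $\Def_{\CE(V,V)}$, and that the map of deformation functors $\Def_V\ra\Def_{\overline{\CE}(V,V)}$ is the one induced by the $L_\infty$-map $\mathbf f$, so that the universal property of Theorem \ref{Lmap} applies. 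Both are immediate from the definitions, so once Theorem \ref{homf} is in force the corollary follows essentially formally.
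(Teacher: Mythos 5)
Your proof is correct and is essentially the paper's own argument: the authors state the corollary as an immediate consequence of Theorem \ref{homf}, having just observed that a homotopy fiber sequence of $L_\infty$-algebras induces a fiber sequence of deformation functors $\Def_V\ra\Def_{\overline{\CE}(V,V)}\ra\Def_{\CE(V,V)}$, and the identification of the image of $\xi\in\MC(V,A)$ under $\mathbf f$ with the twisting $m^\xi-m$ is exactly the universal property recorded in Theorem \ref{Lmap}. Your bookkeeping of base points and of the two notions of triviality matches the intended reading, so nothing is missing.
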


In conclusion note that the constructions of this paper have an
analogue in conventional homotopy theory. Let $X$ be a pointed space;
denote by $\Map(X,X)$ and $\overline{\Map}(X,X)$ the space of all
(respectively pointed) self-maps of $X$. There is an obvious fiber
sequence of spaces
\[
\overline{\Map}(X,X)\ra\Map(X,X)\ra X
\]
which gives rise to the homotopy  fiber sequence
\[
\Omega(X)\ra\overline{\Map}(X,X)\ra\Map(X,X)
\]
where $\Omega(X)$ is the space of based loops on $X$. This is
parallel to any of the fiber sequences (\ref{fibL1}), (\ref{fibL2}),
(\ref{fibL3}) or (\ref{fibL4}). Note that there does not seem to be a
simple description of the map $\Omega(X)\ra\overline{\Map}(X,X)$.

\end{document}